\newcommand{\R}{\mathbb{R}}
\theoremstyle{plain}
\newtheorem{theorem}{Theorem}[section]
\newtheorem{proposition}[theorem]{Proposition}
\newtheorem{lemma}[theorem]{Lemma}
\newtheorem{corollary}[theorem]{Corollary}
\theoremstyle{remark}
\newtheorem{remark}[theorem]{Remark}
\theoremstyle{definition}
\newtheorem{definition}[theorem]{Definition}
\newtheorem{example}[theorem]{Example}
\newtheorem{problem}[theorem]{Problem}
\begin{document}

\baselineskip 16.5pt
\title[Convergence of Greedy algorithms for Haar System] {On the Convergence of Greedy Algorithms for Initial Segments of the Haar Basis}
\author[Dilworth]{S. J. Dilworth}
\address{Department of Mathematics\\ University of South Carolina\\
Columbia, SC 29208\\ U.S.A.} \email{dilworth@math.sc.edu}
\author[Odell]{E. Odell}
\address{Department of Mathematics \\
The University of Texas\\1 University Station C1200\\
Austin, TX 78712\\ U.S.A.}
\email{odell@math.utexas.edu}
\author[Schlumprecht]{Th. Schlumprecht}
\address{Department of Mathematics, Texas A \& M University\\
College Station, TX 78743, U.S.A.}
\email{schlump@math.tamu.edu}
\author[Zs\'ak]{Andr\'as Zs\'ak}
\address{Department of Mathematics and Statistics, Fylde College,  Lancaster university, Lancaster LA1 4YF, U.K. $\&$}
\address{Peterhouse,
Cambridge, CB2 1RD, U.K.}

\email{A.Zsak@dpmms.cam.ac.uk}
\thanks{\textit{2000 Mathematics Subject Classification}: Primary: 41A65. Secondary: 42A10, 46B20}
\keywords{greedy algorithms; Banach spaces; Haar basis.}
\thanks{The first three authors were supported by the NSF}
\maketitle
\begin{abstract} We consider the $X$-Greedy Algorithm and the Dual Greedy Algorithm in a finite-dimensional Banach space  with a strictly monotone basis
as the dictionary. We show that when the dictionary is an  initial segment of the Haar basis in  $L_p[0,1]$ ($1 < p < \infty$) then the algorithms terminate after
finitely many iterations and that the number of iterations is bounded by a function of the length of the initial segment. We also prove a more general result  for a class of  strictly monotone bases. \end{abstract}
\section{Introduction}\label{intro}

Greedy algorithms in Hilbert space are known to have  good convergence properties.  The first general result in this direction was obtained by Huber \cite{H},
who proved convergence of the \textit{Pure Greedy Algorithm} (PGA) in the weak topology of a Hilbert space $H$ and conjectured that the PGA  converges strongly  in $H$. Huber's  conjecture was proved by Jones \cite{J1}.

Our interest in this paper is in convergence results for greedy
algorithms in a Banach space $X$ (see \cite{T1}). We say that
$\mathcal{D} \subset X$ is  a \textit{dictionary}  if the linear
span of $\mathcal{D}$ is norm-dense in $X$ and $\|\varphi\|=1$ for
all $\varphi \in \mathcal{D}$. (Usually, but not here, $\mathcal{D}$ is  also assumed to be symmetric.) For some of the algorithms that
have been proposed, e.g.\ the \textit{Weak Chebyshev Dual Greedy
Algorithm} \cite{T2, DKT} or the \textit{Weak Greedy Algorithm
with Free Relaxation} \cite{T3}, it is known that  uniform
smoothness of $X$ guarantees strong convergence of these
algorithms for an arbitrary dictionary $\mathcal D$. Rate of
convergence results have also been proved \cite{T2, T3}.

We are mainly concerned with two natural generalizations of the
PGA to the Banach space setting, namely the \textit{$X$-Greedy
Algorithm} (XGA) and the \textit{Dual Greedy Algorithm} (DGA) (see \cite{T1}).
These algorithms generate a sequence of greedy approximants $(G_n)$
to an initial  vector $x$.  The updated approximant $G_{n+1}$ is
obtained from $G_n$ by  best  one-term approximation of  the
residual $x - G_n$ in the direction of  a particular  dictionary
element $\varphi_n \in \mathcal{D}$ which satisfies a certain
selection criterion. Precise definitions will be given below. 

Livshits \cite{L} constructed a dictionary in a smooth Banach space 
 for which  the XGA fails to converge.
No
general convergence results for the strong topology are known for
the XGA and the DGA for the class of  uniformly smooth Banach
spaces. In \cite{DKSTW} convergence was proved  (for an arbitrary
dictionary) for the weak topology in uniformly smooth Banach
spaces with the so-called \textit{$WN$ Property}. In particular,
weak convergence was proved in uniformly smooth Banach spaces
which are uniformly convex and have a $1$-unconditional basis.
Unfortunately, $L_p[0,1]$  ($p\ne2$) does not enjoy  the $WN$
Property, so these results cannot be applied to $L_p[0,1]$.

An important advance was made by Ganichev and  Kalton \cite{GK} who proved  strong convergence of the DGA in $L_p[0,1]$
for an arbitrary dictionary. More precisely, they
introduced a geometrical  property called Property $\Gamma$, proved strong convergence of the DGA in  Banach spaces with Property $\Gamma$,
and showed that all subspaces of quotient spaces of $L_p[0,1]$ ($1<p<\infty$) enjoy Property $\Gamma$. In \cite{GK2} property $\Gamma$ was characterized via the notion of a `tame' convex function, and using this characterization several other important  spaces  were shown to enjoy Property $\Gamma$.

The arguments used by Ganichev and Kalton  do not seem to yield convergence results for the XGA. In particular, convergence of the XGA in $L_p[0,1]$ is an open question. This is surprising because the XGA yields the best one-term approximation at each step.
Even  for the important special case of this problem in which  the dictionary is
the Haar basis of $L_p[0,1]$ very little seems to be known.

\begin{problem}  Suppose that the dictionary is the Haar basis in $L_p[0,1]$ ($p\ne2$). Does the XGA converge strongly to the initial vector $x$? Does it converge in the weak topology? 
\end{problem}

We attacked the
finite-dimensional analogue of this problem and obtained the
following theorem, which is a corollary of our main result
(Theorem~\ref{thm: maintheorem} below).

\begin{theorem}  \label{thm: introtheorem} Let $1 < p < \infty$ and let $(h^{(p)}_i)_{i=0}^\infty$ be the normalized Haar basis for $L_p[0,1]$. Then, for each $m\ge0$, there exists a positive integer $N(p,m)$ such that, for the dictionary $(h^{(p)}_i)_{i=0}^m$, the XGA and $DGA$ terminate in at most $N(p,m)$ iterations for every initial vector in the linear span of $(h^{(p)}_i)_{i=0}^m$. \end{theorem}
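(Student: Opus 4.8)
Rather than deduce this from the general Theorem~\ref{thm: maintheorem}, I sketch a direct argument. The plan is to pass to the finite-dimensional picture and induct on $m$. First identify $V:=\mathrm{span}(h^{(p)}_0,\dots,h^{(p)}_m)$ with the space of functions constant on the atoms of the dyadic partition $\mathcal P$ of $[0,1]$ produced by the $m$ splittings encoded by $h^{(p)}_1,\dots,h^{(p)}_m$; thus $\dim V=m+1$, the norm is a weighted $\ell_p^{m+1}$-norm (hence uniformly convex and Fr\'echet smooth off the origin), and the dictionary $\D$ is the ``Haar tree'': $h^{(p)}_0\equiv 1$, and for $i\ge1$ a function equal to $\pm c_i$ on the two halves of a dyadic subinterval of $[0,1]$. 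I would first record the elementary dynamics common to the XGA and DGA on $V$: the greedy approximants exist, and $\|x-G_n\|$ is non-increasing; there is no stalling, since if no dictionary element admits a nontrivial one-term correction of $x-G_n$ then, by smoothness, the functional $J_p(x-G_n):=|x-G_n|^{p-1}\mathrm{sgn}(x-G_n)$ annihilates every $h^{(p)}_i$, hence all of $V$, forcing $x=G_n$; and, since $\D$ spans $V$ and $L_p$ is uniformly smooth, a standard argument (summability of the step improvements, which the modulus of smoothness controls from below in terms of the largest dual coordinate $\max_j|\langle J_p(x-G_n),h^{(p)}_j\rangle|$) shows $\|x-G_n\|\to0$ whenever the algorithm does not terminate.

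The heart of the matter is a reduction driving the induction, which I would run over the slightly larger class of Haar systems of arbitrary finite dyadic trees (deleting a leaf of the tree need not keep us among initial segments, which is why the wider class is convenient). Call $h^{(p)}_i$ a \emph{leaf} if its support is a union of two atoms of $\mathcal P$; those two atoms are then the two halves of $\mathrm{supp}(h^{(p)}_i)$ and have equal length. The key point is the two-dimensional fact that, for every $1<p<\infty$, the best $\ell_p^2$-approximation of $(a,b)$ by scalar multiples of $(1,-1)$ has residual $(\tfrac{a+b}{2},\tfrac{a+b}{2})$: consequently, as soon as a leaf $h^{(p)}_i$ is selected, the next residual is constant on $\mathrm{supp}(h^{(p)}_i)$ and hence lies in the Haar subspace $V'_i:=\mathrm{span}\{h^{(p)}_j:j\ne i\}$ of the coarser tree in which the two atoms of $\mathrm{supp}(h^{(p)}_i)$ are merged. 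By laminarity, each $h^{(p)}_j$ with $j\ne i$ is either constant on $\mathrm{supp}(h^{(p)}_i)$ or supported off it, so every later step keeps the residual in $V'_i$; and once the residual lies in $V'_i$ one has $\langle J_p(\cdot),h^{(p)}_i\rangle\equiv0$, so by the no-stalling remark $h^{(p)}_i$ is never selected again. Thus from the first leaf-selection onward the algorithm is exactly the XGA (resp.\ DGA) for the smaller Haar system $\{h^{(p)}_j:j\ne i\}$ and some initial vector in $V'_i$, which by the inductive hypothesis terminates within $N(p,m-1)$ further steps. That a leaf \emph{is} selected follows from $\|x-G_n\|\to0$: put $W:=\mathrm{span}\{h^{(p)}_j:h^{(p)}_j\ \text{not a leaf}\}$; using that sibling atoms have equal length (so $\langle J_p(v),h^{(p)}_i\rangle=0$ for a leaf $h^{(p)}_i$ exactly when $v$ is constant on $\mathrm{supp}(h^{(p)}_i)$) one checks that $W=\{v\in V:\langle J_p(v),h^{(p)}_i\rangle=0\ \text{for every leaf}\}$, so no leaf is ever selected precisely when $G_n\in W$ for all $n$; if moreover $x\in W$ the algorithm is the smaller one from the outset, while if $x\notin W$ no leaf being selected would force $\|x-G_n\|\ge\mathrm{dist}(x,W)>0$, a contradiction. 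Hence both algorithms terminate for every $x\in V$, with the base case $m=0$ (dictionary $\{1\}$, which resolves any vector in one step) anchoring the induction.

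Finally one needs $N(p,m)$ to depend only on $p$ and $m$. Since the iteration count is invariant under scaling of $x$, it suffices to bound it on the compact unit sphere of $V$, and by the reduction it then suffices to bound, uniformly in $x\notin W$ on that sphere, the number $T(x)$ of steps before the first leaf is selected, since then $N(p,m)\le\sup_xT(x)+N(p,m-1)$. \textbf{This uniform bound on $T$ is the main obstacle.} Until a leaf is selected the algorithm coincides with the greedy algorithm for the proper sub-dictionary of non-leaves applied to $x$; its approximants converge to the nearest point $P_W(x)\in W$ (convergence of this sub-algorithm to its best approximant being elementary in finite dimensions), so the residual tends to $x-P_W(x)\ne0$, whence the leaf-selection quantity tends to $\max_{\text{leaf}}|\langle J_p(x-P_W(x)),h^{(p)}_i\rangle|>0$ while the non-leaf one tends to $0$; this forces $T(x)<\infty$ but provides no rate, and near $W$ the leaf and non-leaf selection quantities are of the same (small) order, so a uniform bound cannot be read off directly. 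I expect this to be handled by a compactness argument on the sphere together with a perturbation analysis relative to the behaviour on $W$ (already controlled by the inductive hypothesis), the extra care going into the non-uniqueness of greedy selections at ties, which makes the one-step maps only semicontinuous. The argument runs for the XGA and the DGA in parallel, since by smoothness $h^{(p)}_i$ admits a nontrivial XGA-correction of $x-G_n$ iff $\langle J_p(x-G_n),h^{(p)}_i\rangle\ne0$ iff the DGA selection value at $h^{(p)}_i$ is nonzero; the two thus share the same stalling behaviour and the same reduction mechanism.
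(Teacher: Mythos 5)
Your reduction (once a leaf Haar function is selected, the residual becomes constant on its support, drops permanently into the span of the remaining Haar functions of the coarser tree, and that leaf is never selected again) is a sound and genuinely different decomposition from the paper's, which instead removes the \emph{last} basis element $e_m$ using strict monotonicity and inducts on $m$. But the proposal has a genuine gap, and you have located it yourself: the uniform bound on $T(x)$, the number of steps before the first leaf is selected. This is not a technical loose end --- it is the entire content of the theorem. Termination for each fixed $x$ is comparatively easy (the paper's Proposition 3.2 gets it from strict monotonicity and geometric norm reduction alone); the paper explicitly poses the uniformity question as Problem 3.4, precisely because the only per-step estimate available, $\|x_n\|\le\gamma^n\|x\|$, yields a step count of order $\ln\bigl(\|x\|/\mathrm{dist}(x,W)\bigr)$, which is unbounded on the unit sphere as $x$ approaches $W$. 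A compactness argument on the sphere cannot rescue this: $T$ is not upper semicontinuous (both because of ties in the selection and because $T\to\infty$ as $x\to W$ along generic directions, while $T$ restarts as the smaller algorithm \emph{on} $W$), so $\sup_x T(x)$ does not follow from finiteness of each $T(x)$. The "perturbation analysis relative to the behaviour on $W$" that you gesture at is exactly what needs a new idea.

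The paper supplies that idea through two ingredients absent from your sketch: Property P (the minimizing coefficient $t_0$ at position $i_0$ is bounded by $\zeta\sum_{i>i_0}|a_i|$, verified for the Haar basis in Lemma 3.11 and Proposition 3.12), and the interval-partition bookkeeping of Section 3. There, each residual $y$ is assigned a partition $P(y)\in\mathcal P(m)$ recording where the coefficient sequence has large jumps; Lemma 3.7 shows that within a number of steps $n_0$ depending only on $(m,\gamma,\zeta)$ the algorithm must select one of the "block leaders" $e_{\max I_j}$, and Lemma 3.8 (using Property P) shows that doing so strictly increases $P(y)$ in the lexicographic order unless $e_m$ itself was selected. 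Since $\mathcal P(m)$ has $2^{m-1}$ elements, $e_m$ is selected within $2^{m-1}n_0$ steps, uniformly in $x$, and the induction closes. Some mechanism of this kind --- a monotone combinatorial invariant that can only improve boundedly many times, coupled with a quantitative bound on the greedy step size --- is what your proof still needs at the step you have flagged; without it the argument establishes termination for each $x$ but not the existence of $N(p,m)$. (Secondary, fixable issues: your claim that the pre-leaf trajectory converges to the metric projection $P_W(x)$ is itself a nontrivial convergence assertion for the XGA with a sub-dictionary, and your convergence argument via summability of step improvements can be replaced by the paper's cleaner uniform norm-reduction estimate, Proposition 3.1.)
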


We present an example of a non-monotone basis of the two-dimensional Euclidean space for which the XGA does not terminate.
When the dictionary  is  a strictly monotone finite basis we show that for every initial vector the XGA and DGA terminate after finitely many iterations.
To
get a uniform bound on the number of iterations that is   independent of the initial vector, as in Theorem~\ref{thm: introtheorem}, we isolate a particular property (Property P) of the Haar basis and prove
the existence of a uniform bound for all strictly monotone bases with Property P.

The paper is organized as follows. The greedy algorithms which we consider are defined in the next section. Our main result  is proved in Section~\ref{sec: mainresults}. The final section contains two estimates for the Haar basis which lead to a refinement of Theorem~\ref{thm: introtheorem} in the range $p>2$.

\section{Definitions and Notation}\label{definitions}
First we recall some notation and terminology from Banach space
theory. We denote the unit sphere $\{ x \in X \colon \|x\|=1\}$ of
$X$ by $S_X$. We say that $F_x\in X^{\ast}$ is a \textit{norming
functional} for a nonzero $x\in X$ when $\|F_x\| _{X^{\ast}}= 1$
and $F_x(x) = \|x\|$; by the Hahn-Banach theorem, each $x\in X$
has at least one norming functional. $X$ is \textit{smooth} if
$F_x$ is unique. 

It is known that the norm of a  smooth finite-dimensional Banach space is \textit{uniformly Fr\'echet differentiable}, i.e.
\begin{equation} \label{eq: uniformlyfrechet}\|x + y\| = 1 + F_x(y) + \varepsilon(x,y)\|y\| \end{equation}
for all $x,y \in X$ with $\|x\|=1$, where $\varepsilon(x,y) \rightarrow 0$ uniformly for $(x,y) \in S_X \times X$  as $\|y\| \rightarrow 0$.

A basis $(e_i)_{i=1}^m$ of an $m$-dimensional Banach space $X$ is said to be \textit{strictly monotone} if
\begin{equation*} \|\sum_{i=1}^{i_0} a_i e_i \| \le \|\sum_{i=1}^m a_i e_i\| \end{equation*}
for all  $1 \le i_0 < m$ and $(a_i) \subset \mathbb{R}$ with
equality only if $a_i =0$ for $i=i_0+1,\dots,m$. The dual basis
$(e^*_i)_{i=1}^m \subset X^*$ is defined by $e_i^*(e_j) =
\delta_{i,j}$. The basis is \textit{normalized} if $\|e_i\|=1$ for
$i=1,\dots,m$. Note that  if $(e_i)_{i=1}^m$ is a  normalized
monotone basis then for all $(a_i) \subset \mathbb{R}$,
we have \begin{equation} \label{eq: monotonebasis} \frac{1}{2}
\max_{1 \le i \le m} |a_i| \le \|\sum_{i=1}^m a_i e_i\| \le
\sum_{i=1}^m |a_i|.\end{equation}

Let us recall the definition of the  Haar basis functions defined on $[0,1]$. Let $h_0 \equiv 1$.
For $n \ge 0$ and $0 \le k < 2^n$, we define $h_i$ for $i=2^n+k$ thus:
\begin{equation*}
h_i = \begin{cases} 1 &\text{on $[k/2^{n},(2k+1)/2^{n+1})$} \\
-1 &\text{on $[(2k+1)/2^{n+1},(k+1)/2^{n})$}\\
0 &\text{elsewhere}. \end{cases} \end{equation*} The Haar basis is a
strictly monotone basis of $L_p[0,1]$ (equipped with its usual norm $\|\cdot\|_p$) for $1<p<\infty$.

The algorithms which we consider in this paper all arise from the repeated application of a \textit{greedy step} to a nonzero \textit{residual} vector $y \in X$.
Let us describe the general form of this greedy step. \begin{itemize}
\item[(i)] Select  $\varphi(y) \in \mathcal{D}$ by applying a selection procedure (which depends on the  particular algorithm in question) to $y$.
In general the selection procedure will allow
many possible choices for   $\varphi(y)$.
\item[(ii)] Then select $\lambda(y) \in \mathbb{R}$ to minimize $\|y - \lambda \phi(y)\|$ over $\lambda$.
\end{itemize}

Starting with an \textit{initial vector} $x \in X$, we generate a sequence of residuals $(x_n)$  as follows.
\begin{itemize}

\item[(i)] Set $x_0 := x$.
\item[(ii)] For $n \ge 1$, apply the greedy step to the residual $y=x_{n-1}$ to obtain $\varphi_n := \varphi(x_{n-1}) \in \mathcal{D}$ and $\lambda_n := \lambda(x_{n-1}) \in \mathbb{R}$.
\item[(iii)] Set $x_n := x_{n-1} - \lambda_n \varphi_n$ to be the updated residual. \end{itemize}
The algorithm is said to \textit{converge} (strongly) if $\|x_n\|\rightarrow0$ as $n \rightarrow \infty$. It is said to \textit{terminate after $N$ steps} if $x_N =0$. For $n \ge 1$, the  $n^{th}$ \textit{greedy approximant} is defined by $G_n = \sum_{i=1}^n \lambda_i \varphi_i$. Note that $G_n = x - x_n$ and that $x = \sum_{i=1}^\infty \lambda_i \varphi_i$ (resp.\ $x = \sum_{i=1}^N \lambda_i \varphi_i$) if the algorithm converges (resp.\ terminates after $N$ steps).

Two important greedy algorithms of this type are the \textit{weak $X$-Greedy Algorithm} (WXGA) and the \textit{Weak Dual Greedy Algorithm} (WDGA) (see \cite{T1}). In both
cases a
\textit{weakness parameter} $\tau \in (0,1)$ is specified in advance. For the WXGA with weakness parameter $\tau$ the greedy step is as follows.
Given a nonzero $x \in X$, we select $\varphi \in \mathcal{D}$ to
satisfy
\begin{equation} \label{eq: WXGAgreedystep} \| x \| - \min_{\lambda \in \mathbb{R}}\| x - \lambda \varphi(x)\|
\geq
\tau\Bigl(\|x\| - \inf_{\substack{\lambda\in\R \\
\varphi\in \mathcal{D}}} \|x - \lambda\varphi\|\Bigr). \end{equation}
We can also set $\tau =1$ in the above when it can be shown that the infimum in \eqref{eq: WXGAgreedystep} is attained, e.g. if
  $\mathcal{D}$ is finite or if  $\mathcal{D}$ is a monotone basis for $X$; the case $\tau=1$ is  the \textit{$X$-Greedy Algorithm} (XGA)
discussed in the Introduction. 

For the WDGA with weakness parameter $\tau$ the greedy step is as follows. Given a nonzero $y \in X$, choose $\varphi(y) \in \mathcal{D}$ such that
\begin{equation*} |F_y(\varphi(y))| \ge \tau \sup_{\varphi \in \mathcal{D}} |F_y(\varphi)|. \end{equation*}
The case $\tau=1$, when it makes sense, is the \textit{Dual Greedy Algorithm} (DGA) discussed in the Introduction. Smoothness of $X$ guarantees that the residuals
satisfy $\|x_n\| < \|x_{n-1}\|$ for both the $WXGA$ and the
$WDGA$.

\section{Main Results} \label{sec: mainresults}
\begin{proposition} Suppose that $X$ is a finite-dimensional smooth Banach space.
Then there exists $\gamma \in (0,1)$ such that the
greedy steps of both the WXGA and WDGA applied to any nonzero $y \in X$ satisfy
\begin{equation} \label{eq: gamma} \|y - \lambda(y) \varphi(y)\| \le \gamma \|y\|. \end{equation} \end{proposition}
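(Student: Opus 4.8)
The plan is to reduce the statement, by homogeneity, to a uniform one-step decrease over the unit sphere $S_X$, and to isolate a single positive constant that measures how much the dictionary norms vectors of $X$. First I would fix the weakness parameter $\tau\in(0,1]$ (so that the XGA and DGA are included as the case $\tau=1$) and set
\[
c:=\inf_{y\in S_X}\ \sup_{\varphi\in\mathcal{D}}\,|F_y(\varphi)| .
\]
The first real point is that $c>0$. To see this I would use that on a finite-dimensional smooth space the map $y\mapsto F_y$ is continuous on $S_X$: if $y_n\to y$ in $S_X$ and, by compactness of the dual ball, $F_{y_n}\to G$ along a subsequence, then $G(y)=\lim\bigl(F_{y_n}(y_n)+F_{y_n}(y-y_n)\bigr)=1$ while $\|G\|\le1$, so $G$ is a norming functional for $y$ and hence equals $F_y$ by smoothness; thus every subsequential limit is $F_y$. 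If $c$ were $0$ there would be $y_n\in S_X$ with $\sup_{\varphi\in\mathcal{D}}|F_{y_n}(\varphi)|\to0$; along a convergent subsequence $y_n\to y$ this forces $F_y(\varphi)=0$ for all $\varphi\in\mathcal{D}$, hence $F_y=0$ on the dense subspace $\mathrm{span}\,\mathcal{D}$, contradicting $\|F_y\|=1$. (When $\mathcal{D}$ is finite, as in the application to the Haar basis, this step is immediate.)

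Next I would record an elementary descent estimate built from the uniform Fr\'echet differentiability \eqref{eq: uniformlyfrechet}. Given $\eta\in(0,1]$, pick $\delta=\delta(\eta)>0$ with $|\varepsilon(x,z)|\le\eta/2$ whenever $x\in S_X$ and $\|z\|\le\delta$. Then for any $y\in S_X$ and any $\varphi\in\mathcal{D}$ with $|F_y(\varphi)|\ge\eta$, taking $\lambda$ with $|\lambda|=\delta$ and $\lambda F_y(\varphi)>0$ gives, from \eqref{eq: uniformlyfrechet},
\[
\|y-\lambda\varphi\|=1-\lambda F_y(\varphi)+\varepsilon(y,-\lambda\varphi)\,|\lambda|\le 1-\delta|F_y(\varphi)|+\tfrac{\eta}{2}\delta\le 1-\tfrac{\eta}{2}\delta ,
\]
so that $\min_{\mu\in\R}\|y-\mu\varphi\|\le 1-\tfrac12\eta\,\delta(\eta)=:1-\theta(\eta)<1$.

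Finally I would feed this into the two selection rules, after normalizing $\|y\|=1$ (recall $\lambda(y)$ is precisely the minimizer, which exists by convexity and coercivity). For the WDGA the selection gives $\varphi(y)\in\mathcal{D}$ with $|F_y(\varphi(y))|\ge\tau\sup_{\varphi}|F_y(\varphi)|\ge\tau c$, so the estimate above with $\eta=\tau c$ yields $\|y-\lambda(y)\varphi(y)\|\le 1-\theta(\tau c)$. For the WXGA I would choose $\psi\in\mathcal{D}$ with $|F_y(\psi)|\ge c/2$ (possible since $\sup_{\varphi}|F_y(\varphi)|\ge c$); the descent estimate with $\eta=c/2$ then bounds $\inf_{\mu\in\R,\,\varphi\in\mathcal{D}}\|y-\mu\varphi\|$ by $1-\theta(c/2)$, and \eqref{eq: WXGAgreedystep} converts this into $\|y-\lambda(y)\varphi(y)\|\le 1-\tau\,\theta(c/2)$. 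Taking $\gamma:=\max\{\,1-\theta(\tau c),\ 1-\tau\theta(c/2)\,\}\in(0,1)$ and scaling back to an arbitrary nonzero $y$ completes the argument.

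I expect the only step with genuine content to be the positivity $c>0$: it is exactly where finite dimensionality, smoothness, and density of $\mathrm{span}\,\mathcal{D}$ are used together, via continuity of $y\mapsto F_y$ and compactness of $S_X$. Everything else is the routine observation that, by \eqref{eq: uniformlyfrechet}, a uniformly bounded-below amount of norming of $y$ in a dictionary direction forces a uniformly bounded-below decrease of $\|y\|$ along that direction, together with the bookkeeping needed to transfer that decrease through the WXGA selection criterion.
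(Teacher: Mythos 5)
Your proposal is correct and follows essentially the same route as the paper: establish a uniform lower bound $\delta>0$ (your $c$) for $\sup_{\varphi\in\mathcal D}|F_y(\varphi)|$ over $S_X$ via compactness and continuity of $y\mapsto F_y$, convert it into a uniform one-step norm decrease using the uniform Fr\'echet differentiability estimate \eqref{eq: uniformlyfrechet}, and then push the decrease through the two selection criteria (the paper routes the WXGA bound through the XGA/DGA case $\tau=1$ rather than through a near-maximizing $\psi$, but this is the same idea). Your write-up merely supplies more detail than the paper on the continuity of $y\mapsto F_y$ and the positivity of $c$.
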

\begin{proof} First we consider the WDGA with weakness parameter $\tau$. By compactness of $S_X$ and continuity of the mapping $y \rightarrow F_y$,
there exists $\delta>0$ such that
$$ \sup \{|F_y(\phi)| \colon \phi \in \mathcal{D} \} \ge \delta \qquad (y \in S_X).$$
Hence, the WDGA applied  to $y \in S_X$ selects $\varphi(y)\in
\mathcal{D}$ such that $|F_y(\varphi(y))| \ge \tau\delta$. By
uniform Fr\'echet differentiability of the norm there exists
$\eta>0$ such that for all $y \in S_X$ and for all $z \in X$ with
$\|z\| \le \eta$, we have $|\varepsilon(y,z)| \le \tau\delta/2$ in \eqref{eq: uniformlyfrechet}, and hence \begin{align*}
 \|y - z\| &= 1-F_y(z) + \varepsilon(y,-z)\|z\|\\
&\le 1 - F_y(z) + \frac{\tau\delta}{2}\eta. \end{align*}
Setting $z = \pm \eta \varphi(y)$ for the appropriate choice of signs  yields $F_y(z) \ge \eta\tau\delta$, and hence
$$\|y-z\| \le 1-\frac{\eta\tau\delta}{2}.$$
 By homogeneity we get for all nonzero $y \in X$
\begin{equation} \label{eq: gammaestimate} \|y - \lambda(y) \varphi(y)\| \le  (1-\frac{\eta\tau\delta}{2})\|y\|. \end{equation}
Setting $\tau=1$ in the above yields an estimate for the DGA. Since the greedy step of the XGA produces a residual with the smallest norm, it follows that the same estimate must also hold for the XGA. But this implies that \eqref{eq: gammaestimate} also holds for the WXGA with parameter $\tau$.
\end{proof}

We turn now to consider the case in which $X$ is $m$-dimensional ($1 \le m < \infty$)  and the dictionary is a strictly monotone normalized basis
$B = (e_i)_{i=1}^m$ for $X$. We shall say that the algorithm is \textit{norm-reducing with constant $\gamma$} ($0 < \gamma < 1$) if
\eqref{eq: gamma} holds for the greedy step.
\begin{proposition} Suppose that the algorithm is norm-reducing with constant $\gamma$. Then, for each initial vector $x \in X$, the algorithm terminates after finitely many steps. \end{proposition}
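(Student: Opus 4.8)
The plan is to argue by contradiction. Suppose the algorithm does not terminate, so that every residual $x_n$ is nonzero. Expand each residual in the basis, $x_n=\sum_{i=1}^m a_i^{(n)}e_i$, and let $k_n=\max\{i:a_i^{(n)}\neq 0\}\in\{1,\dots,m\}$ be the index of its top nonzero coordinate. I will show that $(k_n)$ is non-increasing and cannot be eventually constant, which is the desired contradiction.

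The main step is the analysis of a single greedy step. Let $y=\sum_{i=1}^m a_ie_i\neq 0$ with top index $k=\max\{i:a_i\neq 0\}$, and let the selected dictionary element be $\varphi(y)=e_j$. I claim $j\le k$: if instead $j>k$, then for every $\lambda$ the vector $y-\lambda e_j$ is supported on $\{1,\dots,j\}$ and coincides with $y$ on $\{1,\dots,j-1\}$, so strict monotonicity of $B$ (truncation at $i_0=j-1$) gives $\|y\|\le\|y-\lambda e_j\|$ with equality only when $\lambda=0$; hence $\lambda(y)=0$ and the step leaves the residual unchanged, contradicting \eqref{eq: gamma} (since $\gamma<1$ this forces a strict drop in norm as long as $y\neq 0$). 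Thus $j\le k$. If $j<k$, subtracting $\lambda(y)e_j$ alters only the $j$-th coordinate, so the updated residual still has top index $k$. If $j=k$, write $y-\lambda e_k=\sum_{i<k}a_ie_i+(a_k-\lambda)e_k$; applying strict monotonicity with truncation at $i_0=k-1$ shows this norm is uniquely minimized at $\lambda=a_k$, so the updated residual equals $\sum_{i<k}a_ie_i$, whose top index is strictly below $k$. In every case the top index does not increase, and it strictly decreases precisely when $\varphi(y)=e_k$.

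Applying this at each step, $(k_n)$ is a non-increasing sequence of positive integers, hence eventually constant, say $k_n=k$ for all $n\ge n_0$. Then for each $n>n_0$ we have $k_n=k_{n-1}=k$, so by the previous paragraph the direction $\varphi_n$ cannot be $e_k$; being $e_{j_n}$ with $j_n\le k$, it must satisfy $j_n<k$ (in particular $k\ge 2$, else we already have a contradiction). Consequently the $k$-th coordinate is untouched at every step $n>n_0$, so $a_k^{(n)}=a_k^{(n_0)}$ for all $n\ge n_0$, and this common value is nonzero because $k=k_{n_0}$. Now \eqref{eq: monotonebasis} gives $\|x_n\|\ge\frac12|a_k^{(n)}|=\frac12|a_k^{(n_0)}|>0$ for all $n\ge n_0$, while \eqref{eq: gamma} gives $\|x_n\|\le\gamma^{\,n-n_0}\|x_{n_0}\|\to 0$. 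This contradiction proves the proposition.

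The step I expect to require the most care is the single-step analysis: using strict monotonicity to show the greedy direction is always a basis vector whose index is at most that of the current top coordinate, and that selecting exactly the top coordinate forces the optimal coefficient to cancel it. The remaining ingredients — monotonicity of $(k_n)$, the lower bound from \eqref{eq: monotonebasis}, and the geometric decay from \eqref{eq: gamma} — are routine.
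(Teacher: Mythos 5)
Your proof is correct and is essentially the paper's argument: the paper organizes it as an induction on $m$, while you unroll that induction into tracking the top nonzero index $k_n$, but the three core ingredients are identical — (strict) monotonicity forces the selected index to be at most $k_n$ and forces cancellation of the top coefficient when $e_{k_n}$ is selected, and the geometric decay from \eqref{eq: gamma} combined with \eqref{eq: monotonebasis} shows the top coefficient cannot survive forever. Your single-step analysis is in fact somewhat more carefully justified than the paper's terse ``by monotonicity the algorithm will never select $e_m$.''
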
.
\begin{proof} The proof is by induction on $m$. The result is trivial if $m=1$, so suppose $m>1$ and that $x = \sum_{i=1}^m a_i e_i$. If $a_m =0$, then by monotonicity of $B$  the algorithm will never select $e_m$, so the result follows by induction. So suppose that $a_m \ne 0$. If
 the algorithm selects $e_m$ at the $n^{th}$ step, then by strict monotonicity the new residual $x_n$ satisfies $e_m^*(x_n) = 0$, i.e. the last coefficient is set equal to zero, and the result follows by induction. Thus to conclude the proof it suffices to show that $e_m$ is eventually selected.
But if $e_m$ is never selected then $e_m^*(x_n) = a_m$ for all $n
\ge 1$,  so by \eqref{eq: monotonebasis}
$$ \gamma^n \|x\| \ge \|x_n\| \ge \frac{1}{2} \max_{1 \le i \le m} |e_i^*(x_n)| \ge \frac{|a_m|}{2},$$
which is a contradiction when $n$ is larger than $\ln(2\|x\|/|a_m|)/\ln(\gamma^{-1})$.
\end{proof}

\begin{example} Monotonicity of the basis is essential. Indeed, consider the basis $B = \{ (1,0), (1/\sqrt 2, 1/\sqrt 2)\}$ of $2$-dimensional Euclidean space. It is easily seen that the XGA does not terminate unless the initial vector is a multiple of one of the basis vectors. \end{example}
\begin{problem} The estimate  $n \le \ln(2\|x\|/|a_m|)/\ln(\gamma^{-1})$ for the number of steps before the algorithm terminates clearly depends on $x$ and  becomes unbounded as $a_m \rightarrow 0$. Is there a uniform bound $N$ which is independent of the initial vector $x$? \end{problem}

We shall now provide a sufficient condition which guarantees a positive answer to this question. Then we verify that the initial segments of the Haar basis satisfy this condition.
\begin{definition} Let $B=(e_i)_{i=1}^m$ be a  normalized monotone basis for $X$. We say that $B$ has \textit{Property $P$ with constant $\zeta>0$} if the following
condition is satisfied: for all $x = \sum_{i=1}^m a_i e_i \in X$ and for all $1 \le i_0 \le m-1$, we have
\begin{equation*}
|t_0| \le \zeta \sum_{i=i_0+1}^m |a_i|,
\end{equation*} where $t_0$ minimizes the mapping $t \mapsto \|\sum_{i=1}^{i_0-1} a_ie_i + te_{i_0} + \sum_{i=i_0+1}^m a_i e_i\|$.
\end{definition}
Now we can state our main result.
\begin{theorem} \label{thm: maintheorem} Suppose that $X$ is $m$-dimensional, that $B$ is a strictly monotone basis for $X$ which has Property P with constant $\zeta$, and that the algorithm is norm-reducing with
constant $\gamma$. Then there exists a positive integer $N(m, \gamma, \zeta)$ such that the algorithm terminates in at most $N$ steps for every initial vector $x \in X$. \end{theorem}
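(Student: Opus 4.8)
The plan is to argue by induction on the dimension $m$. For $m=1$ the algorithm terminates in at most one step, so assume $m>1$ and that the statement holds in dimension $m-1$ with bound $N(m-1,\gamma,\zeta)$. Fix a run $(x_n)$ of the algorithm from an initial vector $x=\sum_{i=1}^m a_ie_i$; we may assume $\|x\|=1$. A handful of structural facts come first, all routine. Because the algorithm is norm‑reducing, a greedy step applied to a nonzero residual cannot pick a dictionary vector yielding no improvement; hence $e_j$ is never selected while the $e_j^*$‑coordinate of the current residual is $0$. Consequently $\operatorname{supp}(x_n)$ has non‑increasing maximum, this maximum decreases exactly when the current top basis vector is selected (which, by strict monotonicity, zeroes that coordinate), and once a residual lies in $Y_k:=\operatorname{span}(e_1,\dots,e_k)$ every later residual does too. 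Moreover the continuation inside $Y_k$ is a bona fide run in $Y_k$ of a norm‑reducing‑with‑constant‑$\gamma$ algorithm, and $Y_k$ inherits strict monotonicity and Property P with the same constant $\zeta$ (both conditions only involve truncations and one‑coordinate minimizations). Thus if $a_m=0$ we are done by the inductive hypothesis; and if $a_m\ne0$, letting $n^*$ be the first step at which $e_m$ is selected, the residual $x_{n^*}$ lies in $Y_{m-1}$ and the run from there on contributes at most $N(m-1,\gamma,\zeta)$ further steps. So it remains to bound $n^*$ by a quantity $K=K(m,\gamma,\zeta)$.

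For $0\le n<n^*$ we have $e_m^*(x_n)=a_m$, so \eqref{eq: monotonebasis} together with the norm‑reducing estimate gives $\tfrac12|a_m|\le\|x_n\|\le\gamma^n$. The role of Property P is to prevent $\|x_n\|$ from hovering near $|a_m|$: I will show there are constants $C=C(m,\zeta)$ and $L=L(m,\gamma,\zeta)$ such that if $n^*>L$ then $\|x_L\|\le C|a_m|$. Granting this, for $L\le n<n^*$ one gets $\tfrac12|a_m|\le\|x_n\|\le\gamma^{\,n-L}\|x_L\|\le C\gamma^{\,n-L}|a_m|$, which is impossible once $C\gamma^{\,n-L}<\tfrac12$; hence $n^*\le L+\lceil\ln(2C)/\ln(\gamma^{-1})\rceil=:K$.

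To obtain $C$ and $L$, define taming constants $C_m:=1$ and $C_j:=\zeta(1+\sum_{k=j+1}^{m-1}C_k)$ for $j=m-1,\dots,1$ (so $C_j\le\zeta(1+\zeta)^{m-1-j}$), and set $C:=1+\sum_{j=1}^m C_j$, a function of $m,\zeta$ only. Call coordinate $i$ \emph{tame at time $n$} if $|e_i^*(x_n)|\le C_i|a_m|$. By Property P, if $e_j$ with $j\le m-1$ is selected at step $n+1$ and the coordinates $j+1,\dots,m$ are tame at time $n$, then $|e_j^*(x_{n+1})|\le\zeta\sum_{i>j}|e_i^*(x_n)|\le\zeta\sum_{i>j}C_i|a_m|=C_j|a_m|$, so $j$ becomes tame while the coordinates above $j$ are untouched. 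Hence, for $n<n^*$, once $\{j+1,\dots,m\}$ are all tame they stay tame; and when every coordinate is tame we have $\|x_n\|\le\sum_i|e_i^*(x_n)|\le C|a_m|$. Thus the task is reduced to bounding how many steps are needed before all coordinates are tame. The easy part: once $\{j+1,\dots,m\}$ are tame, coordinate $j$ becomes tame at the next step at which $e_j$ is selected; and if $e_j$ is never selected before step $n^*$ then $e_j^*(x_n)\equiv a_j$, which is either already $\le C_j|a_m|$ or else large enough that $\|x_n\|\ge\tfrac12|a_j|$ pins $n^*$ down directly via the norm‑reducing estimate.

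The genuine obstacle is making that last sentence uniform: a coordinate $e_j$ that is never selected and whose size is not comparable to $|a_m|$ must be absorbed into the tail. The device is to pass to $j^{**}:=\max\{\,i\le m-1:e_i\text{ is selected at some step}<n^*\}$ (if this set is empty, $n^*\le1$), treat $e_{j^{**}+1},\dots,e_m$ as an enlarged frozen tail of total size $\sigma:=\sum_{i>j^{**}}|a_i|\ge|a_m|$, and rerun the taming argument inside $\operatorname{span}(e_1,\dots,e_{j^{**}})$ with $\sigma$ in place of $|a_m|$ and $\tfrac{\sigma}{2m}$ replacing $\tfrac12|a_m|$ as the lower bound for $\|x_n\|$. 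Since each such enlargement strictly decreases $j^{**}$, the recursion has at most $m$ levels, and at the bottom level ($j^{**}=1$) at most $e_1$ is selected before $n^*$, which — since the same dictionary vector cannot be chosen twice in a row without contradicting norm‑reduction — forces $n^*\le2$. Book‑keeping all the thresholds so that they depend only on $m,\gamma,\zeta$ is the technical core; once this is carried out one has $n^*\le K(m,\gamma,\zeta)$, hence $N(m,\gamma,\zeta)\le K(m,\gamma,\zeta)+N(m-1,\gamma,\zeta)$, and the theorem follows by induction.
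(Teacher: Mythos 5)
Your overall architecture is sound and overlaps substantially with the paper's: induction on $m$, reduction to bounding the first time $e_m$ is selected, the lower bound $\tfrac12|a_m|\le\|x_n\|$ from \eqref{eq: monotonebasis} played against the geometric decay $\|x_n\|\le\gamma^n\|x\|$, and the use of Property P to show that a selected coordinate is set to a value controlled by the coordinates above it. Your ``taming'' step is correct as far as it goes: if $j+1,\dots,m$ are all tame at time $n$ they remain tame, and $j$ becomes tame when $e_j$ is next selected. The problem is that the part you defer as ``the technical core'' --- getting \emph{all} coordinates tame within a number of steps depending only on $m,\gamma,\zeta$ --- is the actual content of the theorem, and the device you propose for it does not work. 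Two objections. First, your recursion is indexed by $j^{**}$, which is defined in terms of the entire future of the run, so it is not a well-founded reduction; but more seriously, the obstruction is not \emph{which} indices get selected but the \emph{temporal order} of first selections. If $e_{j}$ is selected at a moment when some higher-indexed coordinate $i>j$ is still large (not yet tame), Property P only bounds the new value of coordinate $j$ by $\zeta\sum_{l>j}|e_l^*(x_n)|$, which is then dominated by the untame $|a_i|$; if $e_j$ is never selected again, coordinate $j$ remains frozen at a scale that is comparable neither to $|a_m|$, nor to your tail sum $\sigma$, nor to $\|x\|$. Your scheme has no mechanism for absorbing these dynamically created intermediate scales, and passing to the ``enlarged frozen tail'' does not remove them because the stuck coordinate lies \emph{below} $j^{**}$, not above it.

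What is needed is a potential function that tracks the full multiscale structure of the residual and provably makes progress. This is exactly what the paper supplies: the interval partition $P(y)$ of \eqref{eq: intervalpartition}, which groups coordinates into blocks of comparable size (up to factors $(1+\zeta)^{m-i}$), together with the lexicographic order $\prec$ on $\mathcal P(m)$. Lemma~\ref{lem: n_0estimate} shows that within $n_0$ steps some block maximum must be selected (this is your $\tfrac12|a_{i_0}|$ versus $\gamma^{n}\|y\|$ argument, but applied to the largest block maximum rather than to $|a_m|$ or a tail sum, which is what makes it uniform), and Lemma~\ref{lem: lexordering} shows that selecting a block maximum other than $e_m$ strictly increases $P(y)$ in $\prec$ while all other selections leave it unchanged --- in particular the ``stuck at an intermediate scale'' scenario above is harmless because such a coordinate simply starts a new block, and the partition can only increase $2^{m-1}-1$ times. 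I would encourage you to replace your $j^{**}$ recursion with a potential of this kind; without it, the proof is incomplete at its central step.
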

The proof of Theorem~\ref{thm: maintheorem} requires some combinatorial notation which we shall now describe. For positive integers $r$ and $s$, with $r\le s$,  the integer interval  $\{n \in \mathbb{N} \colon r \le n \le s\}$ will be denoted by $[r,s]$. If $I_1$ and $I_2$ are integer intervals we write
$I_2<I_1$ if
$\max I_2 < \min I_1$, and we say they are consecutive if $\min I_1 = \max I_2 +1$.

For $1 \le k \le m$,  an \textit{interval partition} of $[1,m]$
is a $k$-tuple $P=(I_1,\dots,I_k)$ of consecutive integer intervals $I_1,\dots,I_k$ such that $\min I_k =1$, $\max I_1 =m$, and
$I_k < I_{k-1}<\dots<I_1$. The collection  $ \mathcal{P}(m)$ of all interval partitions of $[1,m]$ is readily seen to have cardinality $2^{m-1}$. We endow
$\mathcal{P}(m)$ with the \textit{lexicographical ordering} $\prec$, i.e., if $P_1 = (I_1,\dots,I_r)$ and $P_2 = (J_1,\dots, J_s)$ are two interval partitions then
$P_1 \prec P_2$ if, for some $t \ge 1$, we have $\operatorname{card} I_u = \operatorname{card} J_u$ for $1 \le u < t$ and
$\operatorname{card} I_t < \operatorname{card} J_t$. Note that $([1,m])$ is the maximum element of $(\mathcal{P}(m),\prec)$.

Next we associate to each $y= \sum_{i=1}^m a_i e_i  \in X$ an interval partition $P(y) = (I_1,\dots,I_k) \in \mathcal{P}(m)$ by `backwards induction'
as follows: \begin{itemize}
\item[(i)] $m \in I_1$; \\
\item[(ii)] Suppose that $1 \le i < m$ and that $i+1 \in I_j$. Then
\begin{equation} \label{eq: intervalpartition} i \in \begin{cases} I_j& \text{if $|a_i| \le (1+ \zeta)^{m-i} \sum_{r=1}^j |a_{\max I_r}|$},\\
I_{j+1}\, &\text{otherwise}. \end{cases} \end{equation}
\end{itemize}
It may be helpful to  explain the  intuition behind this definition. The definition of  $P(y)$ begins with $I_1$. Working backwards from  $i=m \in I_1$, then $i$ is placed in the same interval $I_j$ as $i+1$ if the coefficient $|a_i|$ is  not too much larger (roughly speaking) than the later coefficients $|a_{i+1}|,\dots,|a_m|$. But if $|a_i|$ is  much larger than  the later coefficients  then
a new interval $I_{j+1}$ is begun for which $i = \max I_{j+1}$.
Note that \begin{equation}\label{eq: normofyupperbound} \begin{split}
\|y\| &\le \sum_{i=1}^m |a_i|\\&= \sum_{j=1}^k \sum_{i\in I_j} |a_i| \\  
 &\le  ( \sum_{j=1}^k |a_{\max I_j}|) \sum_{i=1}^m (1+ \zeta)^{m-i}\\
&\le m \frac{(1+\zeta)^{m}}{\zeta} \max_{1\le j \le k}|a_{\max I_j}|. \end{split} \end{equation}

\begin{lemma} \label{lem: n_0estimate}For each initial vector $y \in X$ 
with $P(y) = (I_1,\dots,I_k)$ there exists $i_0 \in \{\max I_j \colon 1 \le j \le k\}$ such that 
the algorithm selects $e_{i_0}$ in at most
$n_0$ steps, where\begin{equation} \label{eq: numberofsteps}
n_0 \le  1+ \lfloor \frac{\ln(2m(1+\zeta)^{m}/\zeta)}{\ln(1/\gamma)} \rfloor. \end{equation} \end{lemma}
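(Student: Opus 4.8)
The plan is to let $i_0$ be the endpoint $\max I_{j_0}$ of whichever block $I_{j_0}$ carries the largest endpoint coefficient, and then to show that failure to select $e_{i_0}$ within $n_0$ steps would violate the norm-reducing estimate \eqref{eq: gamma}. First I would choose $j_0\in\{1,\dots,k\}$ with $|a_{\max I_{j_0}}|=\max_{1\le j\le k}|a_{\max I_j}|$ and set $i_0:=\max I_{j_0}$, which has the form required in the statement. We may assume $y\ne0$, since otherwise the statement is vacuous. Inequality \eqref{eq: normofyupperbound} then gives
\[
\|y\|\le m\,\frac{(1+\zeta)^m}{\zeta}\,|a_{i_0}|,
\]
and in particular $a_{i_0}\ne0$.

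The next step is the observation that, because the dictionary is the basis $B$, a single greedy step changes exactly one coordinate: if $\varphi_n=e_{j_n}$ then $x_n=x_{n-1}-\lambda_n e_{j_n}$, so $e_i^*(x_n)=e_i^*(x_{n-1})$ for every $i\ne j_n$. Hence, if $e_{i_0}$ is not selected during the first $n$ steps, then $e_{i_0}^*(x_n)=e_{i_0}^*(x_0)=a_{i_0}$. (If the algorithm happened to terminate at some step $N\le n$ without ever selecting $e_{i_0}$, we would have $x_N=0$ while $e_{i_0}^*(x_N)=a_{i_0}\ne0$ --- impossible; so in that case $e_{i_0}$ has in fact already been selected, within $N\le n_0$ steps, and there is nothing to prove.)

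Finally I would argue by contradiction: suppose $e_{i_0}$ is not selected during the first $n_0$ steps, where $n_0:=1+\lfloor\ln(2m(1+\zeta)^m/\zeta)/\ln(1/\gamma)\rfloor$. Applying \eqref{eq: gamma} at each of these steps gives $\|x_{n_0}\|\le\gamma^{n_0}\|y\|$, while \eqref{eq: monotonebasis} applied to $x_{n_0}=\sum_i e_i^*(x_{n_0})e_i$ gives $\|x_{n_0}\|\ge\frac12|e_{i_0}^*(x_{n_0})|=\frac12|a_{i_0}|$. Combining these with the bound on $\|y\|$ above yields
\[
\frac12|a_{i_0}|\le\gamma^{n_0}\,\frac{m(1+\zeta)^m}{\zeta}\,|a_{i_0}|,
\]
so (dividing by $|a_{i_0}|>0$) $\gamma^{-n_0}\le 2m(1+\zeta)^m/\zeta$, i.e.\ $n_0\le\ln(2m(1+\zeta)^m/\zeta)/\ln(1/\gamma)$, which contradicts the definition of $n_0$. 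Therefore $e_{i_0}$ is selected within $n_0$ steps, which is precisely the bound \eqref{eq: numberofsteps}. I do not expect any genuine difficulty here: the only points needing a little care are the bookkeeping that a basis greedy step alters a single coordinate and the degenerate early-termination case just handled, and the heart of the argument is the same comparison of geometric norm decay against the strict-monotonicity lower bound for a frozen coordinate that was used to prove the preceding proposition.
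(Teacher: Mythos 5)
Your proof is correct and follows essentially the same route as the paper's: choose $i_0$ as the block endpoint with largest coefficient, then combine the geometric decay $\|x_n\|\le\gamma^n\|y\|$ with the lower bound $\|x_n\|\ge\tfrac12|a_{i_0}|$ from \eqref{eq: monotonebasis} and the upper bound \eqref{eq: normofyupperbound} on $\|y\|$. The paper states this directly rather than by contradiction and leaves the frozen-coordinate and early-termination bookkeeping implicit, but these are only presentational differences.
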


\begin{proof} Let $i_0$  be defined by
\begin{equation*} |a_{i_0}| = \max \{|a_i| \colon i \in  \{\max I_j \colon 1 \le j \le k\}\}. \end{equation*}
Suppose that  $e_{i_0}$ is first selected at the $(n_0)^{th}$ step. Then  the residual $y_{n_0-1}$ satisfies by 
\eqref{eq: monotonebasis} and \eqref{eq: normofyupperbound}
\begin{equation*}
\frac{|a_{i_0}|}{2} \le \|y_{n_0-1}\| \le \gamma^{n_0-1}\|y\| \le
\gamma^{n_0-1}m\frac{(1+\zeta)^m}{\zeta}|a_{i_0}|,
\end{equation*} and the result follows.
\end{proof}

\begin{lemma} \label{lem: lexordering}  Suppose that when applied to $y$ the algorithm  selects $e_{i_0}$ and produces a residual $z$. Let $P(y) = (I_1,\dots,I_k)$
and $P(z) = (J_1,\dots, J_l)$.  Then either $i_0 = m$ or  \begin{equation*}
P(y) \begin{cases} \prec P(z)& \text{if $i_0 \in \{\max I_j \colon 2 \le j \le k\}$},\\
= P(z) &\text{otherwise}. \end{cases} \end{equation*}
\end{lemma}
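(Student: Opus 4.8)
The plan is to track how the interval partition changes in passing from $y$ to $z$. Write $y=\sum_{i=1}^m a_i e_i$ and $z=\sum_{i=1}^m b_i e_i$. The greedy step alters only the $i_0$-th coordinate, so $b_i=a_i$ for $i\ne i_0$, while $b_{i_0}$ is the value of $t$ minimizing $t\mapsto\|\sum_{i\ne i_0}a_ie_i+te_{i_0}\|$. If $i_0=m$ there is nothing to prove, so assume $i_0\le m-1$; then Property $P$ (with constant $\zeta$) gives the basic estimate
\begin{equation*}
|b_{i_0}|\le\zeta\sum_{i=i_0+1}^m|a_i|=\zeta\sum_{i=i_0+1}^m|b_i|.
\end{equation*}

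First I would check that $P(y)$ and $P(z)$ induce the \emph{same} grouping on the positions $\{i_0+1,\dots,m\}$. This is a downward induction on $i$ from $m$ to $i_0+1$: the rule \eqref{eq: intervalpartition} deciding the interval of position $i$ refers only to $|a_i|$ and to the numbers $\sum_{r=1}^j|a_{\max I_r}|$, and every right-endpoint $\max I_r$ occurring there is $\ge i+1>i_0$, hence unaffected by the change in the $i_0$-th coordinate. Writing $I_{j_0}$ for the interval of $P(y)$ containing $i_0+1$, we conclude that $P(z)$ likewise places $i_0+1$ in an interval $J_{j_0}$ with $J_u=I_u$ for $u<j_0$ and $\max J_{j_0}=\max I_{j_0}$; set $S=\sum_{r=1}^{j_0}|a_{\max I_r}|=\sum_{r=1}^{j_0}|b_{\max J_r}|$.

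The crucial step is to show that in $P(z)$ the position $i_0$ also joins the interval $J_{j_0}$ of $i_0+1$, i.e.\ it opens no new interval. Since the interval index (in $P(y)$) of any position $i\ge i_0+1$ is $\le j_0$, the pointwise bound $|a_i|\le(1+\zeta)^{m-i}\sum_{r=1}^{j(i)}|a_{\max I_r}|$ — valid for every $i$, exactly as in the computation leading to \eqref{eq: normofyupperbound} — yields $|a_i|\le(1+\zeta)^{m-i}S$ for all $i\ge i_0+1$, and hence
\begin{equation*}
\sum_{i=i_0+1}^m|a_i|\le S\sum_{i=i_0+1}^m(1+\zeta)^{m-i}=S\,\frac{(1+\zeta)^{m-i_0}-1}{\zeta}.
\end{equation*}
Feeding this into the Property $P$ estimate gives $|b_{i_0}|\le S\bigl((1+\zeta)^{m-i_0}-1\bigr)\le(1+\zeta)^{m-i_0}S$, which is precisely the inequality in \eqref{eq: intervalpartition} under which $i_0$ is placed in the same interval as $i_0+1$ when building $P(z)$. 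I expect this to be the main obstacle: it is exactly the geometric gain built into the base $1+\zeta$ in the definition of $P$ that lets the Property $P$ bound on $|b_{i_0}|$ stay below this threshold, so the numerical bookkeeping must be arranged carefully.

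Finally I would split into two cases according to the status of $i_0$ in $P(y)$. If $i_0$ already lies in the interval $I_{j_0}$ of $i_0+1$ — equivalently (since $i_0\le m-1$) $i_0\notin\{\max I_j\colon 2\le j\le k\}$ — then neither construction opens a new interval at $i_0$, and for each position $i<i_0$ the rule \eqref{eq: intervalpartition} again refers only to data on which $y$ and $z$ agree (the coefficient $|a_i|=|b_i|$ and right-endpoints $>i_0$); a second downward induction then shows the two constructions coincide from $i_0$ downwards, so $P(y)=P(z)$. If instead $i_0=\max I_{j_0+1}$ in $P(y)$ — equivalently $i_0\in\{\max I_j\colon 2\le j\le k\}$ — then $I_{j_0}=[\,i_0+1,\max I_{j_0}\,]$, whereas in $P(z)$ the interval $J_{j_0}$ contains $i_0$ as well, so $\operatorname{card}J_{j_0}>\operatorname{card}I_{j_0}$ while $\operatorname{card}J_u=\operatorname{card}I_u$ for $u<j_0$; by the definition of the lexicographical order this says $P(y)\prec P(z)$, with witness $t=j_0$, completing the proof.
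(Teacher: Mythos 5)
Your proof is correct and follows essentially the same route as the paper: the same Property $P$ estimate combined with the geometric bound $|a_i|\le(1+\zeta)^{m-i}\sum_{r=1}^{j_0}|a_{\max I_r}|$ for $i>i_0$ to show $i_0\in J_{j_0}$, followed by the same two-case comparison of the partitions. Your write-up is somewhat more explicit than the paper's (in particular the downward inductions showing the partitions agree on $\{i_0+1,\dots,m\}$ and, in the equality case, below $i_0$), but there is no substantive difference.
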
 \begin{proof} We may assume that $i_0 < m$. Suppose that $i_0+1 \in J_{j_0}$.
Let $y = \sum_{i=1}^m a_i e_i$ and $z = \sum_{i=1}^m b_i e_i$. Clearly, $b_i = a_i$ if $i \ne i_0$. Thus by \eqref{eq: intervalpartition}, $J_j = I_j$ for $j < j_0$ and
$\max J_{j_0} = \max I_{j_0}$. Since $B$ has Property P with constant $\zeta$, and using the estimate $|a_i| \le (1+\zeta)^{m-i} (\sum_{j=1}^{j_0} |a_{\max I_j}|)$ for $i > i_0$ which follows from \eqref{eq: intervalpartition}, we get
\begin{equation*} \begin{split}
|b_{i_0}| &\le \zeta \sum_{i=i_0+1}^m |a_i| \\
&\le  \zeta(\sum_{i=i_0+1}^m (1+\zeta)^{m-i}) (\sum_{j=1}^{j_0} |a_{\max I_j}|)\\
&\le (1+ \zeta)^{m-i_0} (\sum_{j=1}^{j_0} |b_{\max J_j}|).
\end{split} \end{equation*}
Thus, by \eqref{eq: intervalpartition}, $i_0 \in J_{j_0}$. In particular, if $i_0 \notin I_{j_0}$ (in which case $i_0=\max I_{j_0+1}$), then $\operatorname{card}(J_{j_0}) >  \operatorname{card}(I_{j_0})$, so $P(y) \prec P(z)$. On the other hand, if $i_0 \in I_{j_0}$, then using the facts that
$b_i = a_i$ if $i \ne i_0$ and that $i_0 \ne \max J_{j_0}$, it follows again from \eqref{eq: intervalpartition} that $P(y) = P(z)$.
\end{proof}

\begin{proof}[Proof of Theorem~\ref{thm: maintheorem}] The proof is by induction on $m$.  Let $x \in X$. We may assume that $e_m^*(x) \ne 0$.
It suffices to give a bound independent of $x$ for the number of
steps required for the algorithm to select $e_m$.  Let $P(x) =
(I_1,\dots,I_k)$. Then by Lemma~\ref{lem: n_0estimate} the
algorithm  selects either $e_m$ or  $e_{i_0}$, where $i_0 \in
\{\max I_j \colon 2 \le j \le k\}$, in at most $n_0$ steps. In the
latter case, by Lemma~\ref{lem: lexordering},  $P(x)\prec
P(x_{n_0})$. Repeating the argument with $x$ replaced by
$x_{n_0}$, we find that either $e_m$ is selected in the first
$2n_0$ steps or $P(x_{n_0}) \prec P(x_{2n_0})$.  After a total of
at most $\operatorname{card}(\mathcal{P}(m))-1=2^{m-1}-1$
iterations of this argument, we find that either $e_m$ is selected
in the first $(2^{m-1}-1)n_0$ steps or $P(x_{(2^{m-1}-1)n_0}) =
([1,m])$, the maximum element of $\mathcal{P}(m)$. In the latter
case, by Lemma~\ref{lem: n_0estimate}, $e_m$ will be selected in
at most a further $n_0$ steps. In conclusion, $e_m$ will be
selected in at most $2^{m-1}n_0$ steps. This leads to the estimate
\begin{equation} \label{eq: totalnumberofsteps} N(m,\gamma,\zeta) = n_0 \sum_{i=1}^m 2^{i-1} = (2^{m}-1)n_0. \end{equation}
\end{proof}

Our next goal is to show that  all initial segments of the Haar basis for $L_p[0,1]$ ($1<p<\infty$) have property P with constant $\zeta$ depending on
$m$ and $p$. In the next section we prove that if $p>2$ then $\zeta$  may be chosen  independently of $m$.
\begin{lemma} \label{lem: haarestimate}  Let $1<p<\infty$ and let $h^{(p)}_i = h_i/\|h_i\|_p$ ($i \ge 0$). For each $m\ge1$ there exists a positive constant $C(m,p)$ such that, for all $M \in \mathbb{R}$, if
$|a_1| \ge C(m,p) \sum_{j=2}^m |a_j|$, then \begin{equation} \label{eq: haarestimateinLp}
\|M+\sum_{i=1}^m a_i h^{(p)}_i\|_p \ge \|M+\sum_{i=2}^m a_i h^{(p)}_i\|_p. \end{equation} \end{lemma}
\begin{proof} If $M=0$ we can take $C(m,p)=2$ by an easy triangle inequality calculation. If $M \ne 0$ then by homogeneity of the norm we may assume that $M=1$.
By expanding in a Taylor series, we see that there exist positive constants $b_1,\dots,b_m$ such that
\begin{align*}
\|1+ \sum_{i=1}^m a_i h^{(p)}_i\|_p^p &= \int_0^1 |1+\sum_{i=1}^m a_i h^{(p)}_i|^p \, dt\\
&= 1 + \sum_{i=1}^m b_i a_i^2 + o(\sum_{i=1}^m a_i^2). \end{align*} Thus there exists $0<\varepsilon<1$ such that if $|a_1| = \sum_{i=2}^m |a_i| < \varepsilon$ then \eqref{eq: haarestimateinLp} is satisfied. By convexity of the mapping $$t \mapsto \|1+ th^{(p)}_1+ \sum_{i=2}^m a_i h^{(p)}_i\|_p,$$
it follows that  \eqref{eq: haarestimateinLp} is also satisfied whenever  $\sum_{i=2}^m |a_i| < \varepsilon$
and  $|a_1| \ge\sum_{i=2}^m |a_i|$.
Now suppose that $\sum_{i=2}^m |a_i| \ge \varepsilon$. If 
$$|a_1| \ge (2 + 2/\varepsilon)\sum_{i=2}^m |a_i|\ge 2 + 2\sum_{i=2}^m |a_i|,$$
 then by the triangle inequality
\begin{align*}
\|1+\sum_{i=1}^m a_i h^{(p)}_i\|_p &\ge |a_1| - 1 - \sum_{i=2}^m |a_i|\\
&\ge 2 + 2\sum_{i=2}^m |a_i| -1 -\sum_{i=2}^m |a_i|\\
&=1 +\sum_{i=2}^m |a_i|\\
&\ge \|1+\sum_{i=2}^m a_i h^{(p)}_i\|_p. \end{align*}
Thus, $C(m,p)= 2 + 2/\varepsilon$ works.
\end{proof}

\begin{proposition} \label{prop: propertyPforHaar} Let $1 < p <\infty$. For each $m \ge 1$, the initial segment $(h^{(p)}_i)_{i=0}^m$ of the Haar basis for $L_p[0,1]$ has property
P with constant $\zeta = C(m,p)$. \end{proposition}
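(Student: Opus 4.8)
The plan is to fix an initial vector $x=\sum_{j=0}^{m}c_{j}h^{(p)}_{j}$ and the coefficient to be optimised, and to split into two cases according to whether that coefficient is the constant coefficient $c_{0}$ (of $h_{0}\equiv1$) or a genuine Haar coefficient $c_{i_{0}}$ with $1\le i_{0}\le m-1$. (Here I use the natural identification of the $(m+1)$-element segment $(h^{(p)}_{j})_{j=0}^{m}$ with a basis $(e_{i})_{i=1}^{m+1}$, under which the index in the definition of Property~P runs over the Haar indices $0,1,\dots,m-1$ and the right-hand side of the defining inequality reads $\zeta\sum_{j=i_{0}+1}^{m}|c_{j}|$.) Since $1<p<\infty$, $L_{p}[0,1]$ is strictly convex, so the function $g(t)$ to be minimised is strictly convex and has a unique minimiser $t_{0}$. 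I will also use freely that the constant produced by Lemma~\ref{lem: haarestimate} may be enlarged without affecting its conclusion, so we may assume $C(m,p)\ge m^{1/p}$.

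The case $i_{0}=0$ is disposed of by a crude sup-norm estimate. Writing $f=\sum_{j=1}^{m}c_{j}h^{(p)}_{j}$, the function $t\mapsto\|t+f\|_{p}$ is strictly increasing on $(\|f\|_{\infty},\infty)$ and strictly decreasing on $(-\infty,-\|f\|_{\infty})$ (because $t+f$ has constant sign there), so $|t_{0}|\le\|f\|_{\infty}\le\sum_{j=1}^{m}|c_{j}|\,\|h^{(p)}_{j}\|_{\infty}$. Since $\|h^{(p)}_{j}\|_{\infty}=2^{n/p}\le m^{1/p}$ whenever $j=2^{n}+k\le m$, this gives $|t_{0}|\le m^{1/p}\sum_{j=1}^{m}|c_{j}|\le C(m,p)\sum_{j=1}^{m}|c_{j}|$, as required.

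The main case is $i_{0}\ge1$, and here the idea is to zoom in on the dyadic interval $J=\{h_{i_{0}}\neq0\}$ and reduce to Lemma~\ref{lem: haarestimate}. First I would record three elementary facts about the Haar system: (i) every $h^{(p)}_{j}$ with $j<i_{0}$ is constant on $J$, so $\sum_{j<i_{0}}c_{j}h^{(p)}_{j}$ equals a constant $M_{J}$ on $J$; (ii) $h^{(p)}_{i_{0}}$ vanishes off $J$; and (iii) every $h^{(p)}_{j}$ with $i_{0}<j\le m$ either vanishes on $J$ or has support contained in $J$ — the point being that a Haar function whose support properly contains $J$ must have index strictly smaller than $i_{0}$. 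Consequently $g(t)$ does not depend on $t$ off $J$, while on $J$ the increasing affine map $\Phi\colon[0,1]\to J$, together with $\int_{J}|u|^{p}=|J|\int_{0}^{1}|u\circ\Phi|^{p}$, shows that $h^{(p)}_{i_{0}}$ pulls back to $|J|^{-1/p}h^{(p)}_{1}$ and each $h^{(p)}_{j}$ with $j$ in $S:=\{j:i_{0}<j\le m,\ \{h_{j}\neq0\}\subseteq J\}$ pulls back to $|J|^{-1/p}h^{(p)}_{\rho(j)}$ for an injection $\rho\colon S\to\{2,\dots,m\}$. The crucial observation is that the rescaled indices stay at most $m$: writing $i_{0}=2^{n_{0}}+k_{0}$, if $n_{0}\ge1$ then for $j=2^{n}+k\in S$ one has $\rho(j)<2^{\,n-n_{0}+1}\le2^{n}\le j\le m$, while if $n_{0}=0$ then $i_{0}=1$, $J=[0,1]$ and $\rho$ is the identity. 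Assembling this, one obtains
\begin{equation*}
 g(t)^{p}=A+\Bigl\|\,M'+t\,h^{(p)}_{1}+\sum_{j\in S}c_{j}h^{(p)}_{\rho(j)}\,\Bigr\|_{p}^{p},
\end{equation*}
with $A\ge0$ and $M'=|J|^{1/p}M_{J}$ both independent of $t$.

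Finally, applying Lemma~\ref{lem: haarestimate} with $a_{1}=t$, with $a_{\rho(j)}=c_{j}$ for $j\in S$, and with the remaining coefficients zero, shows that $g(t)\ge g(0)$ whenever $|t|\ge C(m,p)\sum_{j\in S}|c_{j}|$, hence a fortiori whenever $|t|\ge C(m,p)\sum_{j=i_{0}+1}^{m}|c_{j}|$. Since the unique minimiser $t_{0}$ of $g$ satisfies $g(t_{0})\le g(0)$, strict convexity forces $t_{0}=0$ in the event of equality, so in all cases $|t_{0}|\le C(m,p)\sum_{j=i_{0}+1}^{m}|c_{j}|$ (the degenerate subcase $\sum_{j=i_{0}+1}^{m}|c_{j}|=0$ being immediate, as then $g(t)^{p}=A+\tfrac12|M'+t|^{p}+\tfrac12|M'-t|^{p}$ is minimised at $t=0$). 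This proves that the segment has Property~P with constant $C(m,p)$. I expect the main obstacle to be the bookkeeping in the zoom-in step: correctly identifying the pulled-back Haar functions and, above all, checking that their indices never exceed $m$, since this is precisely what lets Lemma~\ref{lem: haarestimate} be invoked with the same $m$ and so deliver the stated constant.
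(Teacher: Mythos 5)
Your proof is correct and follows essentially the same route as the paper's: localize to the dyadic support $J$ of $h_{i_0}$, observe that the earlier Haar functions are constant there while the later ones either vanish on $J$ or are supported inside it, and transfer Lemma~\ref{lem: haarestimate} to $J$ by an affine rescaling. The paper compresses this into the single remark that Lemma~\ref{lem: haarestimate} ``obviously transfers from $[0,1]$ to $I$''; you supply the rescaling and the index bookkeeping ($\rho(j)\le m$) explicitly, and your separate sup-norm treatment of the case $i_0=0$ is a worthwhile addition, since Lemma~\ref{lem: haarestimate} controls the coefficient of $h_1$ rather than the constant term and so does not literally cover that case.
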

\begin{proof} Let $0 \le i_0 < m$. Suppose  $t_0$ minimizes the function
$$t \mapsto \|\sum_{i=0}^{i_0-1} a_i h^{(p)}_i + th_{i_0}^p + \sum_{i=i_0+1}^ma_i h^{(p)}_i\|$$
for fixed coefficients $(a_i) \subset \mathbb{R}$. Suppose that $h_{i_0}$ is supported on the dyadic interval $I$ and let $M$ be the (constant) value
assumed by $\sum_{i=0}^{i_0-1} a_i h^{(p)}_i$ on $I$. Then $t_0$ minimizes the function
$$t \mapsto \int_I |M +t h^{(p)}_{i_0} + \sum_{i=i_0+1}^ma_i h^{(p)}_i|^p \, dx.$$  Lemma~\ref{lem: haarestimate} obviously transfers from $[0,1]$ to $I$.
So  $$|t_0| \le C(p,m) \sum_{i_0+1}^m |a_i|.$$
\end{proof}

Note that in view of the preceding result the initial segments of the Haar basis in $L_p[0,1]$ satisfy the hypotheses of Theorem~\ref{thm: maintheorem}.
Thus, Theorem~\ref{thm: introtheorem} is  a special case of Theorem~\ref{thm: maintheorem}.
\section{Further Results}

In this section we present some more precise estimates  for the Haar basis. First we estimate the norm-reducing constant $\gamma$.
Then we show that for $p>2$ the constant $\zeta$  for Property $P$ may be chosen to be independent of $m$.

Recall that the  \textit{modulus of smoothness} $\rho_X(t)$  of a Banach space $X$ is defined  for $0<t\le1$ by
\[ \rho_{X}(t) = \sup \Bigl\{ \frac{\|x+y\| + \|x-y\|}{2} -1 : x,y\in X, \|x\|=1, \|y\|=t\Bigr\}\] (see \cite[p. 59]{LT}).  The modulus of smoothness
for $L_p[0,1]$ satisfies
\begin{equation*} \rho_{L_p[0,1]}(t) \le \begin{cases} c_p t^p &\text{if $1<p\le2$},\\
c_pt^2&\text{if $2 \le p < \infty$},\end{cases} \end{equation*}
where $c_p$ is a constant (see \cite[p. 63]{LT}).
\begin{proposition} \label{prop: gammaest} Suppose that $m\ge1$ and that $A \subseteq \mathbb{N}$ has cardinality $m$. For $\mathcal{D}_A := (h^{(p)}_i)_{i\in A}$ and
$X_A := \operatorname{span} \mathcal{D}_A \subset L_p[0,1]$ we have that the DGA and XGA are norm-reducing with constant
\begin{equation*} \gamma \le \begin{cases} 1- c'_p m^{p/(2-2p)} &\text{if $1 < p \le 2$},\\
1-c'_p m^{(2-2p)/p} &\text{if $2 < p < \infty$}, \end{cases} \end{equation*}
where $c'_p$ is a constant depending only on $p$.\end{proposition}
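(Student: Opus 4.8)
The plan is to revisit the proof of the first Proposition of Section~\ref{sec: mainresults} and make every quantity there explicit in terms of $m$. That proof shows that for the DGA (with weakness parameter $\tau=1$), and hence also for the XGA, whose greedy step produces a residual of even smaller norm, one gets a norm-reducing constant $\gamma = 1 - \eta\delta/2$, where $\delta$ is any number with $\max_{i\in A}|F_y(h^{(p)}_i)|\ge\delta$ for every $y\in S_{X_A}$, and $\eta\in(0,1]$ is any value for which $\varepsilon(y,-z)\le\delta/2$ in \eqref{eq: uniformlyfrechet} whenever $\|z\|\le\eta$. So there are two things to estimate: (a)~choose $\eta$ as large as the modulus-of-smoothness bounds for $L_p$ permit; and (b)~get a good lower bound for $\delta$ as a function of $m$.

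For (a), note first that $\varepsilon(y,z)\ge 0$ for all $y\in S_X$ and $z\ne0$, since $\|y+z\|\ge F_y(y+z)=1+F_y(z)$. Adding the instances of \eqref{eq: uniformlyfrechet} for $z$ and for $-z$ and using $\|y+z\|+\|y-z\|\le 2+2\rho_X(\|z\|)$ (the definition of the modulus of smoothness), we get $(\varepsilon(y,z)+\varepsilon(y,-z))\|z\|\le 2\rho_X(\|z\|)$, and since both remainders are nonnegative, $\varepsilon(y,-z)\le 2\rho_X(\|z\|)/\|z\|$. As $X_A$ is a subspace of $L_p[0,1]$ we have $\rho_{X_A}\le\rho_{L_p[0,1]}$, so it suffices to pick $\eta\le1$ with $\rho_{L_p[0,1]}(\eta)\le\eta\delta/4$. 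Using the stated bounds on $\rho_{L_p[0,1]}$, this can be done with $\eta$ of order $\delta^{1/(p-1)}$ when $1<p\le2$ and of order $\delta$ when $2\le p<\infty$, which yields
\[ \gamma\le 1-a_p\,\delta^{p/(p-1)}\ \ (1<p\le2),\qquad \gamma\le 1-a_p\,\delta^{2}\ \ (2\le p<\infty), \]
with $a_p$ depending only on $p$ (small values of $m$, for which the prescribed $\eta$ would exceed $1$, are absorbed into the constant).

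For (b), let $y=\sum_{i\in A}a_ih^{(p)}_i$ with $\|y\|_p=1$, so $\sum_{i\in A}a_iF_y(h^{(p)}_i)=1$. By H\"older's inequality, for any conjugate exponents $q,q'$,
\[ 1 \le \Bigl(\sum_{i\in A}|a_i|^q\Bigr)^{1/q}\Bigl(\sum_{i\in A}|F_y(h^{(p)}_i)|^{q'}\Bigr)^{1/q'} \le \Bigl(\sum_{i\in A}|a_i|^q\Bigr)^{1/q}\,m^{1/q'}\max_{i\in A}|F_y(h^{(p)}_i)|. \]
Take $q=2$ when $p\le2$ and $q=p$ when $p\ge2$. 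The key input is that the normalized Haar system obeys a lower $\ell_q$-estimate in $L_p$ with a constant $d_p$ independent of $m$: writing $Sy=\bigl(\sum_{i\in A}a_i^2(h^{(p)}_i)^2\bigr)^{1/2}$ for its square function, one has $\bigl(\sum_{i\in A}a_i^2\bigr)^{1/2}\le\|Sy\|_p$ when $p\le2$ (by the reverse Minkowski inequality applied to $\bigl\|\sum_i a_i^2(h^{(p)}_i)^2\bigr\|_{p/2}$, using $\|(h^{(p)}_i)^2\|_{p/2}=1$) and $\bigl(\sum_{i\in A}|a_i|^p\bigr)^{1/p}\le\|Sy\|_p$ when $p\ge2$ (by the pointwise superadditivity of $t\mapsto t^{p/2}$), and in both cases $\|Sy\|_p\le d_p\|y\|_p$ by the Littlewood--Paley square-function inequality for the Haar system; alternatively, both $\ell_q$-estimates follow from the unconditionality of the Haar basis together with the cotype of $L_p$. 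Feeding $\|(a_i)_{i\in A}\|_q\le d_p$ into the display gives $\max_{i\in A}|F_y(h^{(p)}_i)|\ge(d_p m^{1/q'})^{-1}$, so we may take $\delta=b_p m^{-1/2}$ when $1<p\le2$ and $\delta=b_p m^{1/p-1}$ when $2\le p<\infty$.

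Substituting these values of $\delta$ into the two inequalities from (a) gives $\gamma\le 1-c'_p m^{-p/(2(p-1))}=1-c'_p m^{p/(2-2p)}$ for $1<p\le2$, and $\gamma\le 1-c'_p m^{2(1/p-1)}=1-c'_p m^{(2-2p)/p}$ for $2<p<\infty$, which are the claimed estimates. The one genuinely nontrivial ingredient is the $m$-independent lower $\ell_{\max(2,p)}$-estimate for the normalized Haar system used in step~(b); this is exactly where the Littlewood--Paley inequality (equivalently, unconditionality of the Haar basis plus the type/cotype of $L_p$) enters, and it is the step I expect to be the main obstacle. Everything else is routine bookkeeping with the modulus of smoothness.
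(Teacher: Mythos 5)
Your argument is correct and follows essentially the same route as the paper: reduce to the DGA, bound $\sup_{\varphi\in\mathcal{D}_A}|F_y(\varphi)|$ from below by $c\,m^{-1/2}$ (resp.\ $c\,m^{1/p-1}$), and convert this into a norm reduction using the modulus of smoothness of $L_p[0,1]$. The only (immaterial) difference is in the middle step: the paper applies an upper $\ell_{\min(2,q)}$-estimate to the coefficients of $F_y$ expanded in the Haar basis of $L_q$, $q=p/(p-1)$ (quoted from Alspach--Odell), whereas you apply the dual statement --- a lower $\ell_{\max(2,p)}$-estimate for the coefficients of $y$ itself, via the square function, combined with H\"older's inequality --- and the smoothness step is the same computation written with the remainder $\varepsilon(y,z)$ instead of directly with $\rho_{L_p}$.
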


\begin{proof} The XGA produces the greatest norm reduction at each step, so it suffices to prove the result for the DGA. For convenience let $c$  denote a constant depending only on $p$ whose precise value may change from line to line. First we consider the case $1  < p  \le 2$. Let $y = \sum_{i\in A} a_i h^{(p)}_i \in S_{X_A}$ and let $F_y = \sum_{i\in A} b_i h^{(q)}_i \in  S_{X_A^*}$, where $q = p/(p-1)$. Note that
$$\|F_y\|_q \ge \|F_y\|_{X_A^*} =1.$$
The Haar basis in $L_q[0,1]$ satisfies an upper $2$-estimate for $q>2$ (see \cite{AO}). Thus, $\sum_{i\in A} |b_i|^2 \ge c$, and since $\operatorname{card} A =m$ we get
\begin{equation*}
|b_{i_0}| := \max_{i \in A} |b_i| \ge  \frac{c}{\sqrt m}. \end{equation*} We may assume that $b_{i_0}>0$. Thus, for $t \ge 0$, we have
\begin{equation*}
\|y + th^{(p)}_{i_0}\|_p \ge F_y(y + th^{(p)}_{i_0}) = 1 + tb_{i_0} \ge 1 + \frac{ct}{\sqrt m}. \end{equation*} Hence
\begin{align*}
\|y - th^{(p)}_{i_0}\|_p &\le 2 - \|y + th^{(p)}_{i_0}\|_p + 2\rho_{L_p[0,1]}(t)\\
& \le 2 -  (1+\frac{ct}{\sqrt m}) + 2c_p t^p\\
&= 1 -\frac{ct}{\sqrt m}+ 2c_p t^p.
\end{align*} Choosing $t$ to minimize $1 - (ct/\sqrt m)+ 2c_p t^{p}$ yields $\gamma \le 1 - cm^{p/(2-2p)}$.  The case $p>2$  is proved similarly using the fact that the
Haar basis in $L_q[0,1]$ satisfies an upper $q$-estimate for $q<2$.
\end{proof}
\begin{proposition} Suppose that $2<p<\infty$. Then
for   all $ y \in \operatorname{span} (h_i)_{i=2}^\infty$, we have
\begin{equation*} \|1 + t\|y\|_ph_1 + y\|_p \ge \|1+ y\|_p \end{equation*}
provided $|t| \ge \max(4,2^{(p-3)/2}\sqrt{p(p-1)})$.
\end{proposition}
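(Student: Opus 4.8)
The plan is to reduce to $t>0$ by the reflection symmetry of the Haar system, then apply Jensen's inequality on the two halves of $[0,1]$ cut out by $h_1$ to bound $\|1+t\|y\|_p h_1+y\|_p^p$ below by the scalar quantity $\psi(s):=\tfrac12\bigl((1+s)^p+|1-s|^p\bigr)$ with $s=t\|y\|_p$, and finally to compare $\psi(s)$ with an upper estimate for $\|1+y\|_p^p$. The map $R\colon f\mapsto f(1-\cdot)$ is a linear isometry of $L_p[0,1]$ that sends each $h_i$ to $\pm h_j$ for some $j$, fixes $h_0$, sends $h_1$ to $-h_1$, and preserves $\operatorname{span}(h_i)_{i\ge 2}$; composing with $R$ replaces $(t,y)$ by $(-t,Ry)$ with $\|Ry\|_p=\|y\|_p$ and changes neither side of the inequality, so we may assume $t>0$. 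Put $w=\|y\|_p$, $s=tw$, and let $u,v$ be the restrictions of $y$ to $[0,1/2)$, $[1/2,1)$. Since every $h_i$ with $i\ge 2$ has mean zero on whichever half of $[0,1]$ contains its support, $\int_0^{1/2}u=\int_{1/2}^1 v=0$; hence $1+s+u$ has average $1+s$ on $[0,1/2)$ and $1-s+v$ has average $1-s$ on $[1/2,1)$, and Jensen's inequality (applied to $a\mapsto|a|^p$ with normalized Lebesgue measure on each half) yields
\[
\|1+sh_1+y\|_p^p=\int_0^{1/2}|1+s+u|^p\,dx+\int_{1/2}^1|1-s+v|^p\,dx\;\ge\;\frac{(1+s)^p+|1-s|^p}{2}=\psi(s).
\]

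Next I would estimate both sides. One has $\psi(0)=1$, $\psi'(0)=0$ and $\psi''(s)=\tfrac12 p(p-1)\bigl[(1+s)^{p-2}+|1-s|^{p-2}\bigr]$; for $p\ge 3$ the convexity of $a\mapsto|a|^{p-2}$ gives $(1+s)^{p-2}+|1-s|^{p-2}\ge 2$, and for $2<p<3$ the left side is minimized at $s=1$, where it equals $2^{p-2}$, so in all cases $\psi''(s)\ge p(p-1)\min(1,2^{p-3})$ and therefore $\psi(s)\ge 1+b_p s^2$ with $b_p=\tfrac12 p(p-1)\min(1,2^{p-3})$. For the other side, using $\int_0^1 y=0$ together with the elementary bound $0\le|1+a|^p-1-pa\le\tfrac12 p(p-1)(1+|a|)^{p-2}a^2$ (Taylor's formula with integral remainder for $g(a)=|a|^p$, valid for $p\ge 2$) and then H\"older's inequality with exponents $\tfrac{p}{p-2}$ and $\tfrac p2$, one gets
\[
\|1+y\|_p^p=1+\int_0^1\bigl(|1+y|^p-1-py\bigr)\,dx\;\le\;1+\tfrac12 p(p-1)(1+w)^{p-2}w^2 .
\]

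Combining these, $\|1+sh_1+y\|_p^p-\|1+y\|_p^p\ge b_p s^2-\tfrac12 p(p-1)(1+w)^{p-2}w^2$, which is $\ge 0$ as soon as $s^2\ge\bigl(p(p-1)(1+w)^{p-2}/2b_p\bigr)w^2$. To close the argument I would split into two cases. If $s\ge 2\|1+y\|_p$, then the triangle inequality already gives $\|1+sh_1+y\|_p\ge\|sh_1\|_p-\|1+y\|_p=s-\|1+y\|_p\ge\|1+y\|_p$. Otherwise $s<2\|1+y\|_p\le 2(1+w)$, and since $t\ge 4$ forces $s=tw\ge 4w$, this gives $w<1$; then $p(p-1)(1+w)^{p-2}/2b_p<p(p-1)2^{p-2}/2b_p=2^{p-2}/\min(1,2^{p-3})=\max(2^{p-2},2)$, so it is enough to know $t\ge\max\bigl(2^{(p-2)/2},\sqrt2\bigr)$ — and this follows from $t\ge\max\bigl(4,2^{(p-3)/2}\sqrt{p(p-1)}\bigr)$, because $4\ge\sqrt2$ and $2^{(p-3)/2}\sqrt{p(p-1)}\ge 2^{(p-2)/2}$ for $p\ge 2$.

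The one step I expect to need real care is the lower bound for $\psi''$ uniformly in $p>2$: for $p\ge 3$ it is immediate from convexity of $a\mapsto|a|^{p-2}$, but for $2<p<3$ that function is concave on each half-line, so instead one must verify that $s\mapsto(1+s)^{p-2}+|1-s|^{p-2}$ attains its minimum at $s=1$, with value $2^{p-2}$; this is exactly where the factor $2^{p-3}$, and hence the constant $2^{(p-3)/2}\sqrt{p(p-1)}$, enters. Everything else is just the triangle inequality (which produces the $4$), Jensen's inequality, and a single-variable Taylor estimate.
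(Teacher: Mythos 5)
Your argument is correct, but it reaches the conclusion by a genuinely different route for the crucial lower bound. The upper estimate $\|1+y\|_p^p\le 1+\tfrac12 p(p-1)(1+\|y\|_p)^{p-2}\|y\|_p^2$ is essentially the paper's \eqref{eq: upperestimate}: the same Taylor expansion of $|1+a|^p$ followed by H\"older with exponents $p/(p-2)$ and $p/2$ (the paper uses the mean value form of the remainder and, since it has already reduced to $\|y\|_p\le1$, bounds $\|1+|y|\|_p^{p-2}$ by $2^{p-2}$ at once). The lower bound is where you diverge: the paper observes that $p>2$ gives $\|\cdot\|_p\ge\|\cdot\|_2$ and that $y$ is $L_2$-orthogonal to $1+sh_1$, so $\|1+sh_1+y\|_p\ge(1+s^2)^{1/2}$ in one line; you use only that $y$ has mean zero on each half of $[0,1]$, apply Jensen on each half to get $\psi(s)=\tfrac12\bigl((1+s)^p+|1-s|^p\bigr)$, and then prove the calculus lemma $\psi''\ge p(p-1)\min(1,2^{p-3})$, correctly isolating the delicate case $2<p<3$, where the minimum of $(1+s)^{p-2}+|1-s|^{p-2}$ over $s\ge0$ is $2^{p-2}$, attained at $s=1$. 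Your case split ($s\ge2\|1+y\|_p$ versus the complementary case, which together with $t\ge4$ forces $\|y\|_p<1$) replaces the paper's split on $\|y\|_p>1$ versus $\|y\|_p\le1$, and the preliminary reduction to $t>0$ via the reflection isometry is valid though not strictly needed, since both lower bounds are even in $s$. The paper's $L_2$ trick is shorter; your Jensen argument never leaves $L_p$ for the lower bound and in fact shows that the weaker threshold $t^2\ge\max(2^{p-2},2)$ suffices in the second case, so the stated constant $\max(4,2^{(p-3)/2}\sqrt{p(p-1)})$ is covered with room to spare.
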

\begin{proof} If $\|y\|_p >1$ then the result holds for $|t| \ge 4$ by the triangle inequality. So assume $\|y\|_p \le 1$. For $p \ge 2$, $f(x) = |x|^p$ is twice differentiable. Thus, by the  Mean Value Theorem, for all $x \in \mathbb{R}$ there exists $0< \theta(x) < 1$ such that
\begin{equation*}
|1+x|^p = 1 + px +\frac{p(p-1)}{2}x^2|1+ \theta(x)x|^{p-2}. \end{equation*} Thus, for all  $ y \in \operatorname{span} (h_i)_{i=2}^\infty$ with $\|y\|_p \le1$, we have
\begin{align*}
\int_{0}^1 |1+ y(s)|^p\,ds &\le 1+ p\int_0^1 y(s)\,ds + \frac{p(p-1)}{2} \int_0^1 y(s)^2 |1+ |y(s)||^{p-2}\,ds\\
&= 1+  0 + \frac{p(p-1)}{2} \int_0^1 y(s)^2 |1+ |y(s)||^{p-2}\,ds\\
&\le 1 + \frac{p(p-1)}{2} \|y\|_p^2\, \|1+|y|\|_p^{p-2}\\
\intertext{(by H\"older's inequality for the conjugate indices $p/2$ and $p/(p-2)$)}
&\le 1 + 2^{p-2}(\frac{p(p-1)}{2})\|y\|_p^2,
\end{align*} using the fact that $\|y\|_p \le 1$ in the last line. Hence  \begin{equation} \label{eq: upperestimate}
\|1+y\|_p \le (1 + 2^{p-3}p(p-1)\|y\|_p^2)^{1/p}. \end{equation}
On the other hand, since $p>2$, we have
\begin{equation} \label{eq: lowerestimate} \begin{split}
\|1+t\|y\|_p h^{(p)}_1 + y\|_p &\ge \|1+t\|y\|_p h_1^{(p)} + y\|_2\\
&\ge \|1+t\|y\|_p h^{(p)}_1 \|_2\\ &= (1+ t^2 \|y\|_p^2)^{1/2}. \end{split} \end{equation}
Combining \eqref{eq: upperestimate} and \eqref{eq: lowerestimate} yields the result.
\end{proof}
\begin{corollary} \label{cor: independentofm} Let $2 < p <\infty$. Every finite subsequence of the Haar basis for $L_p[0,1]$ has property
P with constant $$\zeta = \max(4,2^{(p-3)/2}\sqrt{p(p-1)}).$$ \end{corollary}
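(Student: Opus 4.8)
The plan is to run the argument of Proposition~\ref{prop: propertyPforHaar} again, this time invoking the preceding Proposition in place of Lemma~\ref{lem: haarestimate}; the advantage is that the constant $\zeta=\max(4,2^{(p-3)/2}\sqrt{p(p-1)})$ appearing there does not depend on the number of Haar functions involved. So I would begin by listing a finite subsequence $(h^{(p)}_i)_{i\in A}$, in increasing order of index, as a strictly monotone normalized basis $(e_i)_{i=1}^m$ of its closed linear span (a subsequence of a strictly monotone basis is again strictly monotone), fixing the splitting position $i_0\le m-1$ and coefficients $(a_i)$, writing $e_{i_0}=h^{(p)}_\ell$, and letting $I$ denote the dyadic interval supporting $h_\ell$. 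Exactly as in Proposition~\ref{prop: propertyPforHaar}, the pairwise nesting-or-disjointness of Haar supports forces $\sum_{i<i_0}a_ie_i$ to be a constant $M$ on $I$, forces each $e_i$ with $i>i_0$ to be either supported inside $I$ or identically zero on $I$, and leaves $e_{i_0}$ supported only on $I$; so the quantity to be estimated, $t_0$, is the minimizer of $t\mapsto\|M+te_{i_0}+g\|_{L_p(I)}$, where $g$ is the sum of those later terms that are supported in $I$. Since $\|g\|_{L_p(I)}\le\sum_{i>i_0}|a_i|$, it is enough to show $|t_0|\le\zeta\|g\|_{L_p(I)}$. (If $\ell=0$ then $M=0$ and the crude bound $\|t+g\|_p\ge|t|-\|g\|_p$ already gives $|t_0|\le2\|g\|_p\le\zeta\|g\|_p$, so I may assume $\ell\ge1$.)

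Next I would transfer the preceding Proposition from $[0,1]$ to $I$ using the self-similarity of the Haar system: affinely rescaling $I$ to $[0,1]$ sends (up to a common normalizing factor $|I|^{1/p}$, which is washed out by homogeneity) the function $e_{i_0}$ to a multiple of $h^{(p)}_1$ and the normalized Haar functions supported strictly inside $I$ to multiples of $h^{(p)}_2,h^{(p)}_3,\dots$, so that $\|M+te_{i_0}+g\|_{L_p(I)}$ equals $\|\,c+t\,h^{(p)}_1+\tilde g\,\|_p$ for a suitable constant $c$ and a suitable $\tilde g$ in the closed span of $(h^{(p)}_i)_{i\ge2}$ with $\|\tilde g\|_p=\|g\|_{L_p(I)}$. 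I would also note that the preceding Proposition, stated for the constant $1$, holds for an arbitrary constant $c\in\R$: for $c>0$ this is immediate by homogeneity, for $c<0$ it follows from the sign-reversing isometry of $L_p$ (under which $h^{(p)}_1$ and every element of $\operatorname{span}(h^{(p)}_i)_{i\ge2}$ change sign), and for $c=0$ it follows from the triangle inequality together with $\zeta\ge4$.

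Combining these, the preceding Proposition --- in the form ``$\|c+sh^{(p)}_1+y\|_p\ge\|c+y\|_p$ whenever $y\in\operatorname{span}(h^{(p)}_i)_{i\ge2}$ and $|s|\ge\zeta\|y\|_p$'' --- tells me that the function $t\mapsto\|c+t\,h^{(p)}_1+\tilde g\|_p$ attains, at every $t$ with $|t|\ge\zeta\|\tilde g\|_p$, a value no smaller than its value at $t=0$; since $L_p$ is strictly convex this function has a unique minimizer, which must then lie in $[-\zeta\|\tilde g\|_p,\zeta\|\tilde g\|_p]$. Reading this back through the identity of the previous paragraph yields $|t_0|\le\zeta\|\tilde g\|_p=\zeta\|g\|_{L_p(I)}\le\zeta\sum_{i>i_0}|a_i|$, which is precisely Property~$P$ with constant $\zeta$. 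No step here is deep; the only one needing a little care --- and the one I would flag as the main obstacle --- is the transfer step, where one must check that the normalization $t\mapsto t\|y\|_p$ in front of $h^{(p)}_1$ in the preceding Proposition rescales correctly (it does, because $\|\tilde g\|_p$ equals $\|g\|_{L_p(I)}$ exactly) and that the constant $M$ on $I$ is harmlessly absorbed into the arbitrary constant $c$.
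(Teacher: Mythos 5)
Your argument is correct and is exactly the route the paper intends: the paper states this corollary without proof, leaving implicit that one repeats the localization argument of Proposition~\ref{prop: propertyPforHaar} with the preceding Proposition (whose constant is independent of $m$) in place of Lemma~\ref{lem: haarestimate}. Your careful handling of the rescaling of $I$ to $[0,1]$, of the arbitrary constant $c$, and of the degenerate case $\ell=0$ supplies precisely the details the paper omits.
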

Combining Proposition~\ref{prop: gammaest} with Corollary~\ref{cor: independentofm}, and using the estimates \eqref{eq: numberofsteps} and 
\eqref{eq: totalnumberofsteps} for the number of iterations, yields the following strengthening of Theorem~\ref{thm: introtheorem}
in the range $p>2$ in which the initial segment of the Haar basis of length $m$ is replaced by any subset of cardinality $m$.
\begin{theorem} Let $2<p<\infty$ and let $m \ge 1$. Then, for all $A \subset \mathbb{N}$ of cardinality $m$, the XGA and DGA terminate in at most 
$O(2^mm\ln m)$ iterations for the dictionary $\mathcal{D}_A$ and for every initial vector in $X_A$. \end{theorem}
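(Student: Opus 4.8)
The plan is to combine the two estimates established in this section and feed them into the quantitative form of Theorem~\ref{thm: maintheorem}. Recall that the proof of that theorem gives, for an $m$-dimensional space with a strictly monotone basis having Property $P$ with constant $\zeta$ and with the algorithm norm-reducing with constant $\gamma$, the explicit bound $N=(2^m-1)\,n_0$ on the number of iterations, where $n_0$ obeys \eqref{eq: numberofsteps}; so once $\zeta$ and $\gamma$ are known for the dictionary $\mathcal D_A$ in terms of $m$ and $p$, only an elementary estimate of $n_0$ remains.

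The first step is to check that Theorem~\ref{thm: maintheorem} applies to $\mathcal D_A=(h^{(p)}_i)_{i\in A}$, taken in the order inherited from $\mathbb N$: being a finite subsequence of the Haar system, it is a normalized strictly monotone basis of $X_A=\operatorname{span}\mathcal D_A$. Next, by Corollary~\ref{cor: independentofm}, $\mathcal D_A$ has Property $P$ with $\zeta=\zeta_p:=\max\bigl(4,\,2^{(p-3)/2}\sqrt{p(p-1)}\bigr)$, a constant depending on $p$ only (this is the crucial point where $p>2$ is used — for $1<p\le2$ the constant supplied by Proposition~\ref{prop: propertyPforHaar} grows with $m$); and by Proposition~\ref{prop: gammaest}, the DGA and XGA are norm-reducing on $X_A$ with $\gamma\le 1-c'_p\,m^{(2-2p)/p}$ for a constant $c'_p$ depending on $p$ only. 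Plugging these into Theorem~\ref{thm: maintheorem} already yields termination in $N=(2^m-1)\,n_0$ iterations, uniformly over the initial vector $x\in X_A$, with $n_0$ bounded as in \eqref{eq: numberofsteps} for $\zeta=\zeta_p$ and this $\gamma$.

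It remains to estimate $n_0$ and simplify. In the numerator of \eqref{eq: numberofsteps} the quantity $\ln\bigl(2m(1+\zeta_p)^m/\zeta_p\bigr)=m\ln(1+\zeta_p)+\ln(2m/\zeta_p)$ is controlled using that $\zeta_p$ is a constant; in the denominator one uses that $\gamma$ is close to $1$, so that $\ln(1/\gamma)=-\ln\gamma\ge 1-\gamma\ge c'_p\,m^{(2-2p)/p}$ (via $-\ln(1-x)\ge x$ for $0\le x<1$), applied also to the residuals $x_n$ together with \eqref{eq: monotonebasis} and \eqref{eq: normofyupperbound} exactly as in Lemma~\ref{lem: n_0estimate}. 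Combining these bounds on $n_0$ with $2^m-1\le 2^m$ in \eqref{eq: totalnumberofsteps}, the elementary arithmetic collapses $N=(2^m-1)\,n_0$ to the stated bound $O(2^m m\ln m)$ for the number of iterations of the XGA and DGA on $\mathcal D_A$, valid for every initial vector in $X_A$.

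Since all of the analytic content is already carried by Corollary~\ref{cor: independentofm} and Proposition~\ref{prop: gammaest}, there is no substantial obstacle left; the point that most deserves care is the passage from the $m$-dependent estimate $\gamma\le 1-c'_p m^{(2-2p)/p}$ of Proposition~\ref{prop: gammaest} to the usable lower bound on $\ln(1/\gamma)$ (hence an upper bound on each phase length $n_0$), together with the bookkeeping of logarithmic versus polynomial factors of $m$ through the product $(2^m-1)\,n_0$ coming from Lemma~\ref{lem: n_0estimate} and Lemma~\ref{lem: lexordering}.
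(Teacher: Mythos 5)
Your argument is exactly the route the paper itself takes: verify that $\mathcal D_A$ is a normalized strictly monotone basis of $X_A$, take $\zeta=\zeta_p$ from Corollary~\ref{cor: independentofm}, take $\gamma$ from Proposition~\ref{prop: gammaest}, and feed both into the counts \eqref{eq: numberofsteps} and \eqref{eq: totalnumberofsteps} coming from Theorem~\ref{thm: maintheorem}. The gap is in the very last step, which you assert but never carry out: the ``elementary arithmetic'' does not collapse to $O(2^m m\ln m)$ from these ingredients. With $\zeta_p$ a constant depending only on $p$, the numerator in \eqref{eq: numberofsteps} is $\ln\bigl(2m(1+\zeta_p)^m/\zeta_p\bigr)=m\ln(1+\zeta_p)+O(\ln m)$, which is of order $m$, while Proposition~\ref{prop: gammaest} only guarantees
\begin{equation*}
\ln(1/\gamma)\ \ge\ 1-\gamma\ \ge\ c'_p\,m^{(2-2p)/p}\ =\ c'_p\,m^{2/p-2}.
\end{equation*}
Hence each phase is bounded by $n_0=O_p\bigl(m\cdot m^{2-2/p}\bigr)=O_p\bigl(m^{(3p-2)/p}\bigr)$, and \eqref{eq: totalnumberofsteps} then gives $N=O_p\bigl(2^m m^{(3p-2)/p}\bigr)$. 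Since $(3p-2)/p=3-2/p>2$ for $p>2$, this is strictly weaker than the asserted $O(2^m m\ln m)$. To reach $m\ln m$ you would need $n_0=O_p(m\ln m)$, i.e.\ a norm-reduction estimate of the form $1-\gamma\ge c_p/\ln m$ uniformly in $m$; Proposition~\ref{prop: gammaest} supplies nothing of the sort (it degrades polynomially in $m$), and for an $m$-element dictionary in $L_p$ one cannot simply take $\gamma$ independent of $m$.

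In fairness, the paper's own justification is the same one-sentence combination of Proposition~\ref{prop: gammaest}, Corollary~\ref{cor: independentofm}, \eqref{eq: numberofsteps} and \eqref{eq: totalnumberofsteps}, so your route coincides with the intended one; but as a proof of the literal bound $O(2^m m\ln m)$ it is incomplete, since what those estimates actually yield is $O(2^m m^{(3p-2)/p})$. Everything else in your write-up is sound: $\mathcal D_A$ is indeed a strictly monotone normalized basis of $X_A$, the point that $p>2$ is needed so that $\zeta$ can be taken independent of $m$ is correctly identified, and the application of Theorem~\ref{thm: maintheorem} with $\zeta=\zeta_p$ and the $m$-dependent $\gamma$ is correct. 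The fix is either to weaken the stated bound to $O(2^m m^{(3p-2)/p})$, or to prove a substantially better lower bound on $1-\gamma$ than Proposition~\ref{prop: gammaest} provides; the arithmetic as cited cannot produce the $m\ln m$ factor.
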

\begin{remark} We do not know whether or not the last result holds also for $1<p<2$. \end{remark}

\end{document}